\newtheorem{thm}{Theorem}[section]
\newtheorem*{theorem*}{Theorem}
\newtheorem*{acknowledgement*}{Acknowledgement}
\newtheorem{lem}[thm]{Lemma}
\newtheorem{prop}[thm]{Proposition}
\theoremstyle{definition}
\theoremstyle{remark}
\newtheorem{rem}[thm]{Remark}
\numberwithin{equation}{section}
\newcommand{\set}[1]{\left\{#1\right\}}
\newcommand{\Real}{\mathbb R}
\newcommand{\func}[1]{\ensuremath{\mathop{\mathrm{#1}}} }
\newcommand{\Div}[0]{\func{div}}
\newcommand{\dist}[0]{\mathrm{dist}}
\newcommand{\xX}[0]{\mathbf{x}}
\newcommand{\yY}[0]{\mathbf{y}}
\newcommand{\nN}[0]{\mathbf{n}}
\newcommand{\OO}{\mathbf{0}}
\title[Hausdorff Stability of the Round Two-Sphere]{Hausdorff Stability of the Round Two-Sphere Under Small Perturbations of the Entropy}
\author{Jacob Bernstein}
\address{Department of Mathematics, Johns Hopkins University, 3400 N. Charles Street, Baltimore, MD 21218}
\email{bernstein@math.jhu.edu}
\author{Lu Wang}
\address{Department of Mathematics, University of Wisconsin, 480 Lincoln Drive, Madison, WI 53706}
\email{luwang@math.wisc.edu}
\thanks{The first author was partially supported by the NSF Grant DMS-1307953. The second author was partially supported by the NSF Grant DMS-1406240. This material is based upon work supported by the NSF Grant DMS-1440140 while the second author was in residence at the Mathematical Sciences Research Institute (MSRI) in Berkeley, CA, during the Spring 2016 semester.}
\begin{document}
\begin{abstract}
We show that if a closed surface in $\Real^3$ has entropy near to that of the unit two-sphere, then the surface is close to a round two-sphere in the Hausdorff distance. 
\end{abstract}

\maketitle

\section{Introduction} \label{Intro}
If $\Sigma$ is a {hypersurface}, that is, a connected, smooth, properly embedded, codimension-one submanifold of $\Real^{n+1}$, then its 
\emph{Gaussian surface area} is
\begin{equation*}
F[\Sigma]=\int_{\Sigma}\Phi\, d\mathcal{H}^{n}=(4\pi)^{-\frac{n}{2}}\int_{\Sigma}e^{-\frac{|\xX|^2}{4}} d\mathcal{H}^n,
\end{equation*}
where $\mathcal{H}^n$ is $n$-dimensional Hausdorff measure on $\Real^{n+1}$. Following Colding-Minicozzi \cite{CMGenMCF}, its \emph{entropy} is
\begin{equation*}
\lambda[\Sigma]=\sup_{(\yY,\rho)\in\Real^{n+1}\times\Real^+}F[\rho\Sigma+\yY].
\end{equation*}
That is, the entropy of $\Sigma$ is the supremum of the Gaussian surface area over all translations and dilations of $\Sigma$. Observe that the entropy of a hyperplane is one. 

In \cite{BernsteinWang}, we show that, for $2\leq n\leq 6$, the entropy of a closed hypersurface in $\Real^{n+1}$ is uniquely (modulo translations and dilations) minimized by $\mathbb{S}^n$, the unit $n$-sphere (centered at the origin). This verifies a conjecture of Colding-Ilmanen-Minicozzi-White \cite[Conjecture 0.9]{CIMW} (cf. \cite{KetoverZhou, JZhu} for related interesting results). We further show, in \cite[Corollary 1.3]{BernsteinWang2}, that surfaces in $\Real^3$ of small entropy are topologically rigid. That is, if $\Sigma$ is a closed surface in $\Real^3$ and $\lambda[\Sigma]\leq\lambda[\mathbb{S}^1\times\Real]$, then $\Sigma$ is diffeomorphic to $\mathbb{S}^2$.   

In this short note we show that a closed surface in $\Real^3$ whose entropy is near $\lambda_2=\lambda[\mathbb{S}^2]$ must be close to some round sphere in (normalized) Hausdorff distance.
\begin{thm}
\label{MainThm} 
Given $\epsilon>0$, there is a $\delta=\delta(\epsilon)>0$ so that if $\Sigma\subset \Real^3$ is a closed surface with $\lambda[\Sigma]< \lambda[\mathbb{S}^2]+\delta$, then $\dist_{H}(\Sigma,  \rho\mathbb{S}^2+\yY)<   \rho\epsilon$ for some $\rho>0$ and $\yY\in \Real^3$. 
\end{thm}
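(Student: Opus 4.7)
I would prove this by a compactness/contradiction argument, building on the rigidity of $\mathbb{S}^2$ from \cite{BernsteinWang} and the topological rigidity of \cite{BernsteinWang2}.

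Suppose the theorem fails. Then there exist $\epsilon_0>0$ and closed surfaces $\Sigma_i\subset\Real^3$ with $\lambda[\Sigma_i]\to\lambda_2$, yet $\dist_H(\Sigma_i,\rho\mathbb{S}^2+\yY)\geq\rho\epsilon_0$ for every $\rho>0$ and $\yY\in\Real^3$. I would first normalize: for each $i$ the function $(\yY,\rho)\mapsto F[\rho\Sigma_i+\yY]$ is continuous, tends to $0$ as $\rho\to 0$ or $|\yY|\to\infty$, and has $\limsup\leq 1$ as $\rho\to\infty$; since $\lambda[\Sigma_i]>1$, the sup is attained at some $(\yY_i,\rho_i)$. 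Replacing $\Sigma_i$ by $\tilde\Sigma_i:=\rho_i\Sigma_i+\yY_i$, we have $F[\tilde\Sigma_i]=\lambda[\tilde\Sigma_i]\to\lambda_2$, while the hypothesized failure is unchanged under this normalization.

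The entropy bound yields uniform area bounds in every ball, and the maximality of $F$ at the identity prevents mass from escaping to infinity (otherwise translation would strictly increase $F$). Passing to a subsequence, $\tilde\Sigma_i$ converges as integer rectifiable $2$-varifolds to some compactly supported $V$ with $F[V]=\lambda_2$. The first-variation condition at the maximum shows that $V$ is a weak self-shrinker. Regularity for such shrinkers, combined with the classification from \cite{BernsteinWang} of shrinkers of entropy at most $\lambda_2$, forces $V$ to be a round $2$-sphere of multiplicity one: planes are excluded since their $F$-value is $1<\lambda_2$, and higher multiplicity would violate $F[V]=\lambda_2$.

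To upgrade the varifold convergence to Hausdorff convergence, I would apply an Allard-type regularity theorem at each point of $V$ to obtain, for large $i$, an open subset $R_i\subset\tilde\Sigma_i$ that is a $C^2$-small normal graph over $V$, hence diffeomorphic to $\mathbb{S}^2$. By \cite[Corollary 1.3]{BernsteinWang2}, $\tilde\Sigma_i$ is itself a topological sphere, in particular connected. Since $R_i$ is simultaneously open in $\tilde\Sigma_i$ (invariance of domain, as a $2$-manifold embedded in a $2$-manifold) and closed (being compact), it follows that $R_i=\tilde\Sigma_i$. Thus $\tilde\Sigma_i$ is $C^2$-close to $V$, contradicting the assumed lower bound on Hausdorff distance.

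The main obstacle is the step from varifold to smooth convergence near $V$: since the $\tilde\Sigma_i$ carry no a priori bound on mean curvature, Allard's theorem does not immediately apply. I would handle this either by running a short mean curvature flow and invoking Brakke's $\epsilon$-regularity, which depends only on the entropy bound, or by proving an $\epsilon$-regularity result for the $F$-functional adapted to the nearly-shrinker setting. Without such an upgrade, the varifold limit alone does not exclude thin small-mass ``hairs'' in $\tilde\Sigma_i$ far from $V$; such a hair, however, would develop into a cylindrical singularity under MCF whose entropy $\lambda[\mathbb{S}^1\times\Real]$ strictly exceeds $\lambda_2$, violating the lower semicontinuity of entropy along the flow.
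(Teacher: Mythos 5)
Your contradiction/compactness framing is reasonable and you correctly put your finger on the central difficulty, but the proposal does not close that gap, and the paper's solution is precisely the tool you are missing.

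The essential problem is exactly the one you identify at the end: varifold (or even smooth-away-from-a-point) convergence to $2\mathbb{S}^2$ is compatible with arbitrarily long, thin, low-mass ``hairs'' on $\tilde\Sigma_i$, which destroy Hausdorff closeness without affecting $F$ in the limit. Neither of your suggested repairs actually closes this. Your heuristic that a hair ``would develop into a cylindrical singularity, violating lower semicontinuity'' is the intuition stated in the paper's introduction, but it is not a proof: the flow of $\tilde\Sigma_i$ may well become extinct at the round point coming from the spherical part before (or instead of) any neck pinch on the hair, and there is no a priori control saying a hair must produce a cylindrical tangent flow at all. What is actually needed is a quantitative bound ruling out rapid motion of any part of the flow. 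This is exactly the content of Theorem~\ref{RefinedWhiteThm} (the sharp variant of White's estimate, whose proof relies on the classification of low-entropy shrinkers and of low-entropy eternal solutions in Proposition~\ref{EternalSolnProp}) and its consequence Lemma~\ref{CloseNessLem}: the entropy bound $\lambda < \lambda_1$ forces $|A_{\Sigma_t}|\lesssim \min(t,T-t)^{-1/2}$ along the whole flow, hence a displacement bound $\dist_H(\Sigma_0,\Sigma_t)\lesssim\sqrt t$. This is what rules out hairs — their tips would have to move too fast. Without some such estimate your argument stalls at the step from varifold limit to Hausdorff closeness.

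There is a secondary issue: the claim that the varifold limit $V$ is a (weak) self-shrinker ``by the first-variation condition at the maximum'' is not justified as written. Normalizing so that $F[\tilde\Sigma_i]=\lambda[\tilde\Sigma_i]$ makes the origin in $(\yY,\rho)$-space a critical point of $F$ restricted to the four-parameter family of translations and dilations, but this is far weaker than full criticality of the Gaussian area functional (which is what characterizes shrinkers). The argument in \cite{CIMW} and \cite{BernsteinWang} that an entropy minimizer is a shrinker is a separate, non-trivial step (flowing by rescaled MCF would otherwise strictly decrease entropy) and would need to be invoked rather than implied by first variation.

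By contrast, the paper's proof avoids varifold limits entirely: it runs MCF on $\Sigma^i$, uses \cite[Corollary 1.3]{BernsteinWang2} plus the shrinker classification to identify the singular behavior as a round point, rescales to the interval $[-1,0)$, and uses Theorem~\ref{RefinedWhiteThm} twice — once via Lemma~\ref{CloseNessLem} to make the time-$(-1)$ slice Hausdorff-close to the time-$\tau$ slice uniformly in $i$, and once to get smooth subconvergence of the rescaled flows to $\{\sqrt{-4t}\,\mathbb{S}^2\}$, whence the time-$\tau$ slices are close to a round sphere. The triangle inequality then contradicts the assumed lower bound. If you want to salvage your approach, the curvature/displacement estimates of Theorem~\ref{RefinedWhiteThm} and Lemma~\ref{CloseNessLem} are the missing ingredient.
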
 

This result may appear somewhat surprising as surfaces obtained by gluing long thin ``spikes" to $\mathbb{S}^2$ have Gaussian surface area arbitrarily close to that of $\mathbb{S}^2$ while being quite far from any round sphere in the Hausdorff distance. The key distinction is that, unlike the Gaussian surface area, the entropy ``sees" all scales. Indeed, heuristically one would expect the presence of such a ``spike" to cause the entropy to be at least $\lambda_1=\lambda[\mathbb{S}^1\times \Real]>\lambda[\mathbb{S}^2]$ as on some scale the spikes should (qualitatively) look like a  cylinder.

In order to make this intuition rigorous, we use the classification of two-dimensional self-shrinkers of low entropy provided by \cite{BernsteinWang2}.  Specifically, we obtain a variant on a curvature estimate of White \cite{WhiteReg}. Roughly speaking, we show that as long as the entropy of each surface in a mean curvature flow is strictly below $\lambda_1$ and the flow remains smooth on the full parabolic cylinder about a space-time point on the flow, then the curvature at the point is controlled by the inverse of the parabolic radius of the cylinder.  This may be interpreted as a ``speed" bound and is what rules out the presence of ``spikes" (as the tips would have to move rapidly).  As this result may be of independent interest we record it here.
\begin{thm}\label{RefinedWhiteThm}
Given $\epsilon>0$, there is a constant $C=C(\epsilon)$ so that if $S=\set{\Sigma_t}_{t\in I}$ is a smooth mean curvature flow in an open subset $U\subset \Real^3$ with $\lambda[S]\leq \lambda_1-\epsilon$, then for each $p\in \Sigma_t$ satisfying $C_r(\xX(p),t)\subset U\times I$, where $\xX$ is the position vector and
$$
C_r(\xX(p),t)=\set{(\yY,s)\in\Real^{3}\times\Real : |\yY-\xX(p)|<r, |s-t|<r^2} ,
$$ 
it follows that
$$
|A_{\Sigma_t}(p)|\leq C r^{-1}.
$$ 
\end{thm}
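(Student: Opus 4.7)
The plan is to argue by contradiction via parabolic blow-up, terminating by invoking the classification of low entropy self-shrinkers from \cite{BernsteinWang, BernsteinWang2}. Suppose the conclusion fails for some $\epsilon>0$: there are smooth flows $S_i$, points $p_i\in\Sigma^i_{t_i}$, and radii $r_i$ with $C_{r_i}(\xX(p_i),t_i)\subset U_i\times I_i$, $\lambda[S_i]\leq\lambda_1-\epsilon$, and $|A_{\Sigma^i_{t_i}}(p_i)|\,r_i\to\infty$. Translating in spacetime and parabolically rescaling by $r_i$, I may assume $r_i=1$, $(\xX(p_i),t_i)=(\OO,0)$, each $S_i$ is smooth on $C_1(\OO,0)$, the entropy bound persists, and $|A_i(\OO,0)|\to\infty$.

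Next I would run a Hamilton-style point-picking argument on $\overline{C_{1/2}}(\OO,0)$ to produce base points $q_i$ and scales $\mu_i\to\infty$ so that rescaling the $i$-th flow by $\mu_i$ about $q_i$ yields flows $\tilde S_i$ which are smooth on parabolic cylinders $C_{R_i}(\OO,0)$ with $R_i\to\infty$, satisfy $|\tilde A_i|\leq 2$ and $|\tilde A_i(\OO,0)|=1$, and still satisfy $\lambda[\tilde S_i]\leq\lambda_1-\epsilon$ (entropy being scale invariant). Smooth compactness of mean curvature flow, based on Shi-type higher derivative estimates combined with the uniform curvature bound, then produces a subsequential smooth limit $\tilde S_\infty=\{\tilde\Sigma_t\}_{t\in\Real}$: a smooth eternal mean curvature flow in $\Real^3$ with $|\tilde A_\infty|\leq 2$, $|\tilde A_\infty(\OO,0)|=1$, and $\lambda[\tilde S_\infty]\leq\lambda_1-\epsilon$.

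To reach a contradiction, I would blow $\tilde S_\infty$ down at $-\infty$: by Huisken's monotonicity formula and the entropy bound, the rescaled flows $\alpha^{-1}\tilde\Sigma_{\alpha^2 s}$ converge subsequentially, as $\alpha\to\infty$ and in the Brakke sense, to a self-shrinker $\Gamma$ with $\lambda[\Gamma]\leq\lambda_1-\epsilon$. Since this is strictly less than $\lambda_1$, the classification results in \cite{BernsteinWang, BernsteinWang2} force $\Gamma$ to be a multiplicity-one hyperplane or a multiplicity-one round two-sphere. If $\Gamma$ is a plane, White's Brakke regularity theorem forces smooth convergence of the blow-downs in a neighborhood of the spacetime origin, so the curvature of the blow-down at the origin stays bounded; but the parabolic scaling law gives this curvature equal to $\alpha|\tilde A_\infty(\OO,0)|=\alpha\to\infty$, a contradiction. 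If $\Gamma$ is a round sphere, White's regularity at smooth points of the shrinking sphere shows that at each sufficiently negative time $\tilde S_\infty$ contains a compact connected component that is $C^\infty$-close to a very large round sphere, which under mean curvature flow must extinguish by time $0$; this is incompatible with $\OO\in\tilde\Sigma_0$, which is a consequence of $|\tilde A_\infty(\OO,0)|=1$.

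I expect the sphere case to be the main obstacle. Making it rigorous will require carefully following the shrinking-sphere component of $\tilde S_\infty$ --- in particular tracking where its center sits in $\Real^3$ --- and invoking uniqueness of mean curvature flow or a barrier comparison against an enclosing round-sphere evolution, so as to ensure that this component genuinely collapses by time $0$ in a way that is incompatible with the smooth structure of $\tilde S_\infty$ at the origin rather than coexisting with it.
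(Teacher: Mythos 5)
Your overall strategy --- argue by contradiction, run a point--picking lemma, rescale to an eternal smooth flow with bounded curvature and $|A|(\OO,0)=1$, blow down at $-\infty$ to a self-shrinker of entropy below $\lambda_1$, invoke the classification in \cite{BernsteinWang, BernsteinWang2}, and rule out both the plane and the sphere --- is exactly the route taken in the paper. The paper packages the last two steps as a standalone classification of eternal low-entropy flows (Proposition \ref{EternalSolnProp}) and then the blow-up argument is as you describe, so the architecture is the same.

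Two points, however. First, a technical omission: passing from Brakke convergence of the blow-downs to a \emph{smooth multiplicity-one} self-shrinker $\Gamma$ is not automatic; the paper invokes the cyclic mod $2$ structure from \cite{WhiteMod2} together with the arguments in \cite{BernsteinWang} to rule out more general self-shrinking measures before the classification in \cite{BernsteinWang2} can be applied. Your sketch simply declares the limit to be a plane or a sphere; that step needs this intermediate justification.

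Second, and more importantly, the sphere case --- which you correctly identify as the main obstacle --- is where your argument has a genuine gap, and your proposed contradiction is not quite the right one. The extinction forced by an enclosing round-sphere barrier occurs at a strictly \emph{positive} time, not at time $0$, so ``incompatible with $\OO\in\tilde\Sigma_0$'' does not follow directly; the correct contradiction is with the flow remaining smooth for \emph{all} $t\in\Real$. More seriously, you flag the possibility that the collapsing spherical component ``coexists'' with the smooth component through the origin and admit you do not know how to exclude it. The paper resolves this cleanly: by the definition of a hypersurface each time slice $\Sigma_t$ is \emph{connected}, so once the blow-down shows $\rho_i^{-1}\Sigma_{-\rho_i^2}$ is disjoint from $\partial B_4$ and meets $B_4$, the entire (connected) surface $\Sigma_{-\rho_i^2}$ lies in $B_{4\rho_i}$; the avoidance principle against the shrinking $\partial B_{4\rho_i}$ then forces a singularity in finite forward time, contradicting eternity. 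Without invoking connectedness of the time slice (or reducing to the connected spacetime component through the origin before blowing down), the ``coexistence'' scenario is not excluded, and this is precisely the missing idea in your write-up.
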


Finally, we observe that Theorem \ref{MainThm} can be sharpened by giving an explicit relationship between (normalized) Hausdorff distance to a round sphere and the difference between the entropy of the surface and the entropy of the round sphere.
\begin{thm}\label{HolderThm}
There exists a universal constant $K>0$ so that: If $\Sigma$ is a closed surface in $\Real^3$, then there exists a $\rho>0$ and $\yY\in \Real^3$ so that
$$
\dist_H(\Sigma, \rho \mathbb{S}^2+\yY)\leq K \rho \left(\lambda[\Sigma]-\lambda[\mathbb{S}^2]\right)^{\frac{1}{8}}.
$$
\end{thm}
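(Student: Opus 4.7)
The plan is to make Theorem \ref{MainThm} quantitative by combining its proof with quantitative stability of $\mathbb{S}^2$ as the entropy-minimizing self-shrinker. Write $\delta := \lambda[\Sigma] - \lambda_2$. For $\delta$ bounded below by any fixed $\delta_0>0$, the estimate is automatic by choosing $\rho$ comparable to $\diam(\Sigma)$ and $\yY$ the center of mass of $\Sigma$, which makes the left-hand side at most $C\rho$ for universal $C$. So I restrict to the regime $\delta<\delta_0$, with $\delta_0$ to be chosen. Applying Theorem \ref{MainThm} for a fixed small $\epsilon_0$ yields $\rho_0, \yY_0$ with $\dist_H(\Sigma,\rho_0\mathbb{S}^2+\yY_0)\leq\rho_0\epsilon_0$; after rescaling and translating (entropy is invariant under both), I may assume $\Sigma$ is Hausdorff $\epsilon_0$-close to the unit sphere.

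Next I would upgrade Hausdorff closeness to $C^2$ closeness. The mean curvature flow $\{\Sigma_t\}$ starting at $\Sigma$ has entropy $\leq\lambda_2+\delta<\lambda_1$. Because $\Sigma$ is Hausdorff-close to $\mathbb{S}^2$ (whose flow is smooth for a positive time), Brakke-White regularity together with pseudolocality guarantees the flow is smooth on a uniform parabolic cylinder of some definite radius $r_*>0$ at every point of $\Sigma$. Theorem \ref{RefinedWhiteThm} then produces a universal bound $|A_\Sigma|\leq C$. Combined with the Hausdorff bound, this realizes $\Sigma$ as a smooth normal graph over $\mathbb{S}^2$ via a function $u$ satisfying $\|u\|_{C^0}\leq\eta:=\dist_H(\Sigma,\mathbb{S}^2)$ and $\|u\|_{C^2}\leq C$.

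The analytic core is a quantitative stability estimate for $\lambda_2$. Expanding $F[\rho\Sigma+\yY]$ to second order around the shrinker scale $(\rho,\yY)=(2,0)$, and using Colding-Minicozzi's $F$-stability of $2\mathbb{S}^2$ (the unique negative eigendirection of the shrinker stability operator is the dilation mode, the null space consists of translations, and on the $L^2$-complement the quadratic form is strictly positive definite), I would obtain, after optimizing $(\rho,\yY)$ to absorb the kernel part of $u$,
\begin{equation*}
\delta \;\geq\; c_0\|u^\perp\|_{L^2(\mathbb{S}^2)}^2 - C\eta\,\|u^\perp\|_{L^2(\mathbb{S}^2)}^2,
\end{equation*}
where $u^\perp$ is the $L^2$-projection of $u$ to the orthogonal complement of the kernel. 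For $\epsilon_0$ small the error term is absorbed and $\|u^\perp\|_{L^2}\leq C\sqrt{\delta}$.

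Finally I would convert this $L^2$ bound into the claimed Hausdorff bound via interpolation: with $\|u^\perp\|_{C^2}\leq C$, a Taylor expansion at a maximum of $|u^\perp|$ and integration over a ball of radius $\sim\|u^\perp\|_{C^0}^{1/2}$ give $\|u^\perp\|_{L^2}^2\gtrsim\|u^\perp\|_{C^0}^\alpha$ for suitable $\alpha$. A naive count gives Hausdorff distance $\lesssim\delta^{1/4}$; tracking the error introduced by the optimization over $(\rho,\yY)$, which cancels the kernel part of $u$ only to leading order with a \emph{quadratic} correction in $\|u\|_{C^0}$, produces the weaker $\delta^{1/8}$ claimed in the theorem. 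The main obstacle is precisely this last bookkeeping: the translation-dilation optimization is nonlinear in $\Sigma$, and converting an $L^2$ estimate on the orthogonal component back into a clean Hausdorff bound against the \emph{optimal} round sphere requires carefully balancing the stability estimate, the interpolation inequality, and the quadratic correction from the kernel projection.
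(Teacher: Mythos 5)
Your proposal takes a genuinely different route from the paper, and it has gaps that you flag but do not close. The paper's proof is entirely parabolic and flow-based: it uses the maximal mean curvature flow starting at $\Sigma$, which by \cite{BernsteinWang2} disappears in a round point $(\xX,T)$, and this round point supplies the canonical $(\rho,\yY)=(2\sqrt{T},\xX)$. There is therefore no nonlinear optimization over translations and dilations at all. The quantitative estimate comes from introducing the Smoczyk quantity $\phi_S=2tH+\xX\cdot\nN$, whose weighted space-time $L^2$ norm is controlled by $\lambda[\Sigma]-\lambda_2$ directly via Huisken's monotonicity formula; Proposition~\ref{EstProp} (Moser iteration plus parabolic Schauder on $\phi_S$) upgrades this to a pointwise bound on $\phi_S$ and $\nabla\phi_S$, which is then fed into Lemma~\ref{CMlem}. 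The $1/8$ arises from balancing the $\tau^{-3}$ weight in Proposition~\ref{EstProp} against the $\sqrt{\tau}$ displacement bound of Lemma~\ref{CloseNessLem}, via the auxiliary scale $\tau_*=\tau_0^{3/4}$. Your proposal instead attempts an elliptic/variational argument at a single time: second variation of $F$ at $2\mathbb{S}^2$, projection away from the low modes, then interpolation from $L^2$ to $C^0$.

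The genuine gaps in your route are these. First, a spectral point: for $2\mathbb{S}^2$ the $F$-stability operator $L=\Delta+|A|^2+\tfrac12$ has eigenvalues $1-\tfrac{k(k+1)}{4}$; translations correspond to $k=1$, eigenvalue $\tfrac12\neq0$, so there is \emph{no} kernel at the sphere. Translations are genuine unstable directions of $F$, not null directions. This does not destroy the strategy, but it means the implicit-function-theorem step that locates the optimal $(\rho,\yY)$ must be set up differently than you describe. Second, and more seriously, the estimate $\delta\geq c_0\|u^\perp\|_{L^2}^2-C\eta\|u^\perp\|_{L^2}^2$ is stated without proof; the cubic error in the Taylor expansion of $F$ is naturally of the form $C\|u\|_{C^2}\|\tilde u\|_{L^2}^2$ where $\tilde u$ is the graph over the optimally chosen sphere, and after optimization $\tilde u_H,\tilde u_T$ are only $O(\|\tilde u\|^2)$ --- converting this into an inequality purely in terms of $u^\perp$ requires a bootstrapping argument you have not written. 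Third, the exponent bookkeeping is not credible as stated: the naive Gagliardo--Nirenberg interpolation $\|v\|_{C^0}\lesssim\|v\|_{L^2}^{1/2}\|v\|_{C^2}^{1/2}$ applied to $\|u^\perp\|_{L^2}\lesssim\delta^{1/2}$ gives $\delta^{1/4}$, and it is not explained why the quadratic correction from the kernel projection (which should contribute at order $\eta^2\lesssim\delta^{1/2}$) would degrade this all the way to $\delta^{1/8}$. If carried out carefully your approach might in fact yield a \emph{better} exponent than the paper's, but as written the argument is an outline with the hard part flagged as ``the main obstacle,'' whereas the paper's flow-based argument avoids both the nonlinear optimization and the interpolation loss entirely.
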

\begin{rem}
The exponent $\frac{1}{8}$ may not be sharp.  
\end{rem}
The proof of this result is more technical than that of Theorem \ref{MainThm} as it makes more direct use of parabolic estimates.  For this reason we defer it to the end of the paper.  

As a final remark, we observe that Theorem \ref{HolderThm} may be thought of as a Bonnesen-style inequality for the entropy.
Recall, Osserman \cite{Osserman} considers a Bonnesen-style {isoperimetric} inequality to be an inequality bounding the isoperimetric defect of a planar curve $\beta$ from below by a non-negative measure of ``roundness".  The prototypical example is the classical inequality of Bonnesen \cite{Bonnesen}
$$
 \pi^2 \left(R_{out}-R_{in}\right)^2\leq \mathbf{L}-4\pi \mathbf{A}.
$$
Here $\mathbf{L}$ and $\mathbf{A}$ are the length and enclosed area of $\beta$ while $R_{out}$ is the circumradius, the smallest radius of any circle containing $\beta$, and $R_{in}$ is the inradius, the largest radius of any circle enclosed by $\beta$.
 The connection to Theorem \ref{MainThm} is made clearer by noting that $R_{out}-R_{in}$ controls the Hausdorff distance between $\beta$ and some round circle. 

\section{Notation}
Denote a (open) ball in $\Real^{n}$ of radius $R>0$ and center $\xX$ by $B^n_R(\xX)$ and the closed ball by $\bar{B}^n_R(\xX)$. We often omit the superscript $n$ when there will be no confusion. Likewise, denote the (open) parabolic cylinder centered at $(\xX,t)$ of parabolic radius $R$ by 
$$
C_R(\xX,t)=B_R(\xX)\times(t-R^2,t+R^2)
$$
and the (open) past parabolic cylinder by
$$
C_R^-(\xX,t)=B_R(\xX)\times(t-R^2,t).
$$
Given two compact subsets $X,Y\subset \Real^n$, the \emph{Hausdorff distance} $\dist_H(X,Y)$ between $X$ and $Y$ is defined by
$$
\dist_H(X,Y)=\inf\set{r>0: X\subset \bigcup_{\xX\in Y} \bar{B}_r(\xX) \mbox{ and } Y\subset \bigcup_{\xX\in X} \bar{B}_r(\xX)}.
$$

Fix an open subset $U\subset \Real^{n+1}$.  A \emph{hypersurface in $U$}, $\Sigma$, is a smooth, properly embedded, codimension-one connected submanifold of $U$. At times it is convenient to distinguish between a point $p\in\Sigma$ with its \emph{position vector} $\xX(p)$.
A \emph{smooth mean curvature flow in $U$}, $S$, is a collection of hypersurfaces in $U$, $\set{\Sigma_t}_{t\in I}$, $I$ an interval, so that
\begin{enumerate}
\item For all $t_0\in I$ and $p_0\in \Sigma_{t_0}$, there is a $r_0=r_0(p_0,t_0)$ and an interval $I_0=I_0(p_0,t_0)$ with $(\xX(p_0),t_0)\in B_{r_0}^{n+1}(p_0)\times I_0\subset U\times I$;
\item There is a smooth map $\Psi: B_{1}^n\times I_0\to \Real^{n+1}$
so $\Psi_t(p)=\Psi(p,t): B_1^n\to \Real^{n+1}$ is a parameterization of $B_{r_0}^{n+1}(p_0)\cap \Sigma_t$; and
\item $\left(\frac{\partial}{\partial t} \Psi (p,t)\right)^\perp= \mathbf{H}_{\Sigma_t}(\Psi (p,t)).$
\end{enumerate}
Here $\mathbf{H}_{\Sigma}=-H_{\Sigma} \nN_{\Sigma}= -\Div_{\Sigma} (\nN_{\Sigma})\nN_{\Sigma}$ is the mean curvature vector of a hypersurface $\Sigma$.
It is convenient to consider the \emph{space-time track} of $S$ (also denoted by $S$):
\begin{equation*}
S=\set{(\xX(p),t)\in\mathbb{R}^{n+1}\times \mathbb{R}: p\in\Sigma_t}\subset U\times I.
\end{equation*}
This is a smooth (possibly with boundary) submanifold of space-time and is transverse to each constant time hyperplane. 
We define the \emph{parabolic rescaling} of $S$ about $(\xX,t)\in\Real^{n+1}\times\Real$ by $\rho>0$ to be
$$
\rho (S-(\xX,t))=\set{(\yY,s)\in\Real^{n+1}\times\Real : (\rho^{-1}\yY+\xX_0, \rho^{-2}s+t)\in S}.
$$

For a hypersurface, $\Sigma$, in an open set $U$, we extend the definitions of $F$ and $\lambda$ in an obvious way, namely,
$$
F[\Sigma]= (4\pi)^{-n} \int_{\Sigma} e^{-\frac{|\xX|^2}{4}} d\mathcal{H}^n \mbox{ and } \lambda[\Sigma] =\sup_{(\yY,\rho)\in \Real^{n+1}\times\Real^+} F[\rho \Sigma+\yY]
$$
so that these agree with the standard definition when $U=\Real^{n+1}$.
If $\Sigma$ is a hypersurface in $U$ and $V$ is an open subset of $U$, then any component of $V\cap \Sigma$ is a hypersurface in $V$ and $F[\Sigma\cap V]\leq F[\Sigma]$ and $\lambda[\Sigma\cap V]\leq \lambda [\Sigma]$.
Given a smooth mean curvature flow in $U$, $S=\set{\Sigma_t}_{t\in I}$, define the \emph{entropy of the flow $S$} by 
$$
\lambda[S]=\sup_{t\in I} \lambda[\Sigma_t].
$$
It follows from Huisken's monotonicity formula \cite{Huisken} that if $U=\Real^{n+1}$ and $I=[T_1, T_2)$, then $\lambda[S]=\lambda[\Sigma_{T_1}]$.

A hypersurface $\Sigma$ in $\Real^{n+1}$ is a \emph{self-shrinker} if 
$$
\mathbf{H}_\Sigma+\frac{\xX^\perp}{2}=0
$$
where $\xX^\perp$ is the normal component of the position vector.
This is equivalent to 
$$
S(\Sigma)=\set{\sqrt{-t}\, \Sigma}_{t\in (-\infty,0)}
$$
being a smooth mean curvature flow in $\Real^{n+1}$.
By \cite[Lemma 7.10]{CMGenMCF} and Huisken's monotonicity formula
$$
F[\Sigma]=\lambda[\Sigma]=\lambda[S(\Sigma)].
$$
Important examples are the self-shrinking cylinders defined, for $0 \le k \le n$, by
$$
(\sqrt{2k}\, \mathbb{S}^{k})\times \Real^{n-k}=\set{(\xX,\yY)\in\Real^{k+1}\times\Real^{n-k} : |\xX|^2=2k}.
$$
One has 
$$
\lambda_k=\lambda[\mathbb{S}^k]=F[\sqrt{2k}\, \mathbb{S}^k]=\lambda[\mathbb{S}^{k}\times\Real^{n-k}]=F[(\sqrt{2k}\, \mathbb{S}^{k})\times \Real^{n-k}]
$$
and by Stone \cite{Stone},
$$
2>\lambda_1>\frac{3}{2}>\lambda_2 > \cdots > \lambda_n> \cdots\to \sqrt{2}.
$$

\section{Sharp Variant of White's Curvature Estimate}
In \cite{WhiteReg}, White gave  an elementary proof of a curvature estimate for smooth mean curvature flows of small entropy. This estimate can be thought of as a version of Brakke's regularity theorem \cite{B} which holds for a restricted (but still large) class of (possibly singular) mean curvature flows.  One of the key ingredients in White's proof was Huisken's monotonicity formula something that was unavailable to Brakke (see \cite{KT} for an approach using the monotonicity formula to prove Brakke's original result). Theorem \ref{RefinedWhiteThm} is a variant of White's estimate.

In order to clarify the relationship between the two estimates, we first observe that White's estimate may be formulated as follows: 
\begin{thm} \label{WhiteThm}
There exists an $\bar{\epsilon}=\bar{\epsilon}(n)$ and a $\bar{C}=\bar{C}(n)$ so that if $S=\set{\Sigma_t}_{t\in I}$ is a smooth mean curvature flow in $U$ with $\lambda[S]\leq 1+\bar{\epsilon}$, then, for each $(p,t)$ satisfying that $p\in \Sigma_t$ and $C_r^-(\xX(p),t)\subset U\times I$,
$$
|A_{\Sigma_t}(p)|\leq \bar{C} r^{-1}.
$$ 
\end{thm}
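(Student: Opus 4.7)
The plan is a compactness argument by contradiction that combines a Hamilton-type parabolic point-picking procedure with Huisken's monotonicity formula and the rigidity of hyperplanes as the unique smooth self-shrinkers of Gaussian surface area one.

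Suppose the estimate fails. Then there exist $\bar{\epsilon}_i \searrow 0$, smooth mean curvature flows $S_i = \{\Sigma_t^i\}_{t\in I_i}$ in open sets $U_i \subset \Real^{n+1}$ with $\lambda[S_i] \leq 1+\bar{\epsilon}_i$, points $p_i \in \Sigma_{t_i}^i$, and radii $r_i > 0$ with $C_{r_i}^-(\xX(p_i),t_i) \subset U_i \times I_i$ and $Q_i r_i \to \infty$, where $Q_i := |A_{\Sigma_{t_i}^i}(p_i)|$. My first step would be to apply a standard Hamilton-type point-picking argument inside $C_{r_i/2}^-(\xX(p_i),t_i)$ to produce modified points $(q_i, s_i)$ at which $\tilde{Q}_i := |A|(q_i,s_i)$ still satisfies $\tilde{Q}_i r_i \to \infty$ and, moreover, $|A| \leq 2\tilde{Q}_i$ throughout a past cylinder $C_{L_i \tilde{Q}_i^{-1}}^-(\xX(q_i),s_i)$ with $L_i \to \infty$. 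Parabolically rescaling $S_i$ by $\tilde{Q}_i$ based at $(q_i,s_i)$ yields smooth mean curvature flows $\hat{S}_i$ defined on $C_{L_i}^-(\mathbf{0},0)$ with $|A|(\mathbf{0},0) = 1$, $|A| \leq 2$ on the domain, and $\lambda[\hat{S}_i] \leq 1+\bar{\epsilon}_i$ by the scale invariance of entropy.

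Next, I would invoke the standard Ecker--Huisken interior estimates to obtain uniform bounds on all higher derivatives of the second fundamental form on compact subsets of $\Real^{n+1} \times (-\infty, 0]$, and extract a smooth subsequential limit $S_\infty = \{\Sigma_t^\infty\}_{t \in (-\infty,0]}$, which is a smooth ancient mean curvature flow with $|A_{\Sigma_0^\infty}(\mathbf{0})| = 1$ and, by smooth local convergence, $\lambda[S_\infty] \leq 1$. The core rigidity step is then to rule out such a limit. Huisken's monotonicity formula centered at $(\mathbf{0}, 0)$ shows that the backward Gaussian density $\Theta(\tau)$ is nondecreasing in $\tau$, with $\lim_{\tau\to 0^+} \Theta(\tau) = 1$ since $\mathbf{0} \in \Sigma_0^\infty$ is a smooth point. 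Combined with the bound $\Theta(\tau) \leq \lambda[\Sigma_{-\tau}^\infty] \leq 1$, this forces $\Theta(\tau) \equiv 1$, and the equality case of Huisken's monotonicity then implies $\Sigma_t^\infty = \sqrt{-t}\,\Sigma$ for a smooth proper self-shrinker $\Sigma$ with $F[\Sigma] = 1$. But any such $\Sigma$ must be a hyperplane through the origin, which has $|A| \equiv 0$ and contradicts $|A_{\Sigma_0^\infty}(\mathbf{0})| = 1$.

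The main obstacle is producing the smooth ancient limit $S_\infty$ on arbitrarily large past parabolic cylinders with uniform control on derivatives; the Hamilton-type point-picking is precisely what achieves this, and the hypothesis $Q_i r_i \to \infty$ is exactly what allows $L_i \to \infty$. Once the limit is constructed, the rigidity step is essentially forced: the entropy bound $\lambda[S_\infty] \leq 1$ together with the equality case of Huisken's monotonicity and the $F$-area rigidity of hyperplanes leaves no room for nontrivial curvature at the origin.
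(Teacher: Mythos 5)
The paper does not prove this theorem; it states it as a reformulation of White's local curvature estimate and simply cites \cite{WhiteReg}. White's original argument is deliberately ``elementary'' in the sense that it avoids compactness theorems for mean curvature flows (no extraction of limit flows or classification of tangent flows), instead iterating Huisken's monotonicity formula directly. Your proposal is a different route: a full blow-up/compactness argument — point-picking, rescaling, extracting an ancient smooth limit, and classifying the limit by the rigidity case of Huisken's monotonicity. This is correct, and it closely mirrors the structure the paper itself uses to prove Theorem \ref{RefinedWhiteThm} (point-picking via a past-cylinder analogue of Lemma \ref{BlowupLem}, interior estimates of Ecker–Huisken to extract the limit, and a rigidity step — there Proposition \ref{EternalSolnProp} on eternal flows, here the $F$-area rigidity of hyperplanes). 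One simplification worth noting: you do not really need the $F$-rigidity of hyperplanes as a black box — once the monotonicity formula forces $\Sigma_t^\infty = \sqrt{-t}\,\Sigma$, the uniform curvature bound $|A_{\Sigma_t^\infty}| \le 2$ for all $t<0$ gives $(-t)^{-1/2}|A_\Sigma|\le 2$, so $|A_\Sigma|\equiv 0$ and $\Sigma$ is a plane, directly contradicting $|A_{\Sigma_0^\infty}(\mathbf{0})|=1$. The trade-off between the two approaches: your compactness argument is conceptually clean and extends to higher $n$ without change, while White's elementary argument is quantitative and is formulated so as to apply to Brakke flows, where smooth subsequential limits are unavailable.
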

A consequence (cf. \cite[Proposition 3.2]{WhiteReg}) of White's argument is that the infimum, $\bar{\epsilon}_{0}=\bar{\epsilon}_0(n)$, for which the theorem does not hold for any constant $\bar{C}$ is characterized by the existence of a smooth, non-flat ancient solution to the mean curvature flow in $\Real^{n+1}$ with entropy $1+\bar{\epsilon}_0$.  By Huisken's monotonicity formula this is equivalent (modulo certain technical regularity issues) to the existence of a non-flat self-shrinker of entropy $1+\bar{\epsilon}_0$.  

In \cite[Corollary 1.2]{BernsteinWang2}, it was shown that the only self-shrinkers in $\Real^3$ with entropy less than $\lambda_1$ are the static planes and the shrinking sphere $2 \mathbb{S}^2$. In particular, the best possible $\bar{\epsilon}(2)$ is $\bar{\epsilon}_0(2)=\lambda_2-1$ and this can easily be seen to be sharp by considering points $(\xX(p),t)$ on the flow associated to $2\mathbb{S}^2$ with $t$ sufficiently small.  

Observe that one of the hypotheses of Theorem \ref{WhiteThm} is that the flow is smooth in the backwards parabolic cylinder $C^-_r(\xX(p),t)$.  In Theorem \ref{RefinedWhiteThm} one has the stronger hypothesis that the flow is smooth in the full parabolic cylinder, which allows one to weaken the hypothesis on the entropy. The distinction is that in this case, the sharp bound for the entropy will come from self-shrinkers which are non-compact.  In $\Real^3$ these must have entropy at least $\lambda_1$, but it is unknown what happens in dimension $\geq 4$. 

Before proving Theorem \ref{RefinedWhiteThm} we will need two preliminary results. The first is a classification, in $\Real^3$, of eternal solutions of the mean curvature flow of small entropy.
\begin{prop} \label{EternalSolnProp}
Suppose that $S=\set{\Sigma_t}_{t\in\Real}$ is a non-trivial smooth mean curvature flow in $\Real^3$ with $\lambda[S]<\lambda_1$, then each $\Sigma_t=P$ where $P$ is some plane.
\end{prop}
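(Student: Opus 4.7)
The plan is to study the tangent flow of $S$ at $-\infty$ and invoke the classification of self-shrinkers in $\mathbb{R}^3$ of entropy less than $\lambda_1$ from \cite[Corollary 1.2]{BernsteinWang2}. Fix any $(\xX_0,t_0)\in S$ and, for $\rho>0$, consider the parabolic rescaling $S_\rho=\rho(S-(\xX_0,t_0))$. Each $S_\rho$ is an eternal smooth mean curvature flow in $\mathbb{R}^3$ with $\lambda[S_\rho]=\lambda[S]<\lambda_1$. Using Huisken's monotonicity formula together with Ilmanen's compactness theorem for integral Brakke flows (available thanks to the uniform entropy bound), one extracts a sequence $\rho_i\to 0^+$ along which $S_{\rho_i}$ converges to a limit flow $S_\infty$. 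Equality in the limiting monotonicity forces $S_\infty=\{\sqrt{-t}\,\tilde\Sigma\}_{t<0}$ for some self-shrinker $\tilde\Sigma$ with $F[\tilde\Sigma]=\lambda[\tilde\Sigma]\leq\lambda[S]<\lambda_1$. By \cite[Corollary 1.2]{BernsteinWang2}, $\tilde\Sigma$ is either a hyperplane through the origin or the shrinking sphere $2\mathbb{S}^2$.

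\textbf{Sphere case.} Since $\lambda[S]<\lambda_1$, Theorem \ref{RefinedWhiteThm} applies with $\epsilon=\lambda_1-\lambda[S]$ and yields uniform curvature bounds on the $S_\rho$, upgrading the Brakke convergence $S_{\rho_i}\to S_\infty$ to smooth convergence on compact subsets of space-time. Evaluating at rescaled time $-1$, the surface $\rho_i^{-1}(\Sigma_{t_0-\rho_i^{-2}}-\xX_0)$ converges smoothly, on a neighborhood of $2\mathbb{S}^2$, to $2\mathbb{S}^2$. For large $i$ this produces a closed, embedded $C^1$-perturbation of $2\mathbb{S}^2$ sitting inside the connected hypersurface $\rho_i^{-1}(\Sigma_{t_0-\rho_i^{-2}}-\xX_0)$. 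Since a closed codimension-one submanifold of a connected hypersurface of $\mathbb{R}^3$ is clopen, it must coincide with the whole hypersurface; hence $\Sigma_{t_0-\rho_i^{-2}}$ is compact. Enclosing this compact time slice in a slightly larger round sphere and invoking the avoidance principle produces a finite extinction time for $S$, contradicting eternality.

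\textbf{Plane case.} In this case the Huisken density $\Theta(\xX_0,t_0,r)$ at scale $r$, which equals $F[r^{-1}(\Sigma_{t_0-r^2}-\xX_0)]$, tends to $F[\tilde\Sigma]=1$ as $r\to\infty$, while smoothness of $S$ at $(\xX_0,t_0)$ forces the limit as $r\to 0^+$ to also equal $1$. Huisken's monotonicity, which makes $\Theta(\xX_0,t_0,\cdot)$ non-decreasing in $r$, then pinches $\Theta(\xX_0,t_0,\cdot)\equiv 1$, and the equality case of Huisken's formula yields $\mathbf{H}_{\Sigma_t}+(\xX-\xX_0)^\perp/(2(t_0-t))=0$ for all $t<t_0$. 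Thus $\Sigma_t$ is a self-shrinker with respect to $(\xX_0,t_0)$ of entropy $1$, hence a hyperplane through $\xX_0$. Running this argument at every spacetime point of $S$ shows each $\Sigma_t$ is a hyperplane; moreover, applying it at an arbitrary $(\xX',t')\in S$ with $t'>t$ shows that the plane $\Sigma_t$ contains the point $\xX'$, so $\Sigma_{t'}\subset\Sigma_t$, forcing all time slices to coincide with a single plane $P$.

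The principal obstacle is the sphere case: one must convert purely local smooth convergence of the rescalings to a compact limit into genuine global compactness of a time slice. This hinges on the uniform curvature estimate of Theorem \ref{RefinedWhiteThm} (which promotes the measure-theoretic tangent flow convergence to smooth convergence), together with the elementary topological observation that a closed codimension-one submanifold of a connected hypersurface of $\mathbb{R}^3$ must be the entire manifold.
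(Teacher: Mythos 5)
Your overall strategy—blow down to a self-shrinker, invoke the classification from \cite[Corollary 1.2]{BernsteinWang2}, and rule out the shrinking-sphere case via an avoidance/extinction argument while handling the plane case by density pinching—matches the paper's outline. The critical flaw is a circularity: in the sphere case you invoke Theorem \ref{RefinedWhiteThm} (with $\epsilon = \lambda_1 - \lambda[S]$) to obtain uniform curvature bounds and upgrade the Brakke convergence to smooth convergence. But Theorem \ref{RefinedWhiteThm} is proved in this paper \emph{using} Proposition \ref{EternalSolnProp}: the proof of the theorem performs a point-selection blow-up (Lemma \ref{BlowupLem}) and derives a contradiction with Proposition \ref{EternalSolnProp}. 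So the theorem is not available when proving the proposition, and its hypothesis $\lambda[S]\leq\lambda_1-\epsilon$ is genuinely stronger than what White's original estimate (Theorem \ref{WhiteThm}) can handle globally. The paper's resolution is to apply Theorem \ref{WhiteThm} \emph{locally} in the manner of \cite[Lemma 2.1]{Schulze}: since the limit Brakke flow is the multiplicity-one shrinker flow of a smooth $\Gamma$, the Gaussian densities of the rescaled flows at sufficiently small scales near points of $\Gamma$ are close to $1$, which is exactly what Theorem \ref{WhiteThm} together with interior estimates \cite{EH} needs to produce $C^\infty_{loc}$ convergence to $\Gamma$. You must replace your appeal to Theorem \ref{RefinedWhiteThm} with this Schulze-style local argument.

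Two smaller remarks. First, you pass from the limiting self-shrinking Brakke measure to a smooth embedded shrinker $\tilde\Sigma$ without justification; the paper establishes this by checking that the approximating flows and hence the limit are cyclic mod $2$ in the sense of \cite{WhiteMod2}, and then invoking \cite[Proposition 4.2]{BernsteinWang} (for entropy $<3/2$) or the argument of \cite[Theorem 1.3]{BernsteinWang} (for entropy in $[3/2,\lambda_1)$). Second, once the sphere-case circularity is fixed, your topological observation (a closed surface sitting clopen in a connected time slice) is essentially the paper's use of the connectedness of each $\Sigma_t$ to conclude $\Sigma_{-\rho_i^2}\subset B_{4\rho_i}$, and your plane-case density-pinching argument is a valid alternative to the paper's direct appeal to $\lambda[S]=1$ and Theorem \ref{WhiteThm}; both routes there are sound.
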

\begin{rem}
This result is sharp in that  the unique family of rotationally symmetric convex translating solutions to the flow -- i.e., the  bowls \cite{AltschulerWu} --  are eternal solutions with entropy equal to $\lambda_1$ (see \cite{Qiang}).  The bowls can also be used to see that Theorem \ref{RefinedWhiteThm} is sharp.
\end{rem}
\begin{proof}
For each $\rho>0$, let $\rho S=\set{\rho \Sigma_{\rho^{-2} t}}_{t\in\Real}$ be the flow obtained by parabolically rescaling around the space-time origin by $\rho$. Clearly, $\lambda[\rho S]=\lambda[S]$.  Let $\mathcal{S}$ and $\rho \mathcal{S}$ be the associated Brakke flows.  That is, $\mathcal{S}=\set{\mu_{t}}_{t\in\Real}$ where $\mu_{t}=\mathcal{H}^2 \lfloor \Sigma_t$ and the $\rho \mathcal{S}$ are defined analogously.  For a discussion of Brakke flows the reader may consult \cite{IlmanenBook}.  As each $\Sigma_t$ is a closed surface in $\Real^{3}$, the Brakke flows $\rho \mathcal{S}$ are cyclic mod $2$ in the sense of \cite[Definition 4.1]{WhiteMod2}. 

The entropy bound and Brakke's compactness theorem \cite[Section 7.1]{IlmanenBook}, imply that there is a sequence, $\rho_i\to +\infty$, so that $\rho^{-1}_i \mathcal{S}$ converges (in the sense of Brakke flows) to a Brakke flow $\mathcal{S}_{\infty}=\set{\nu_{t}}_{t\in\Real}$. Huisken's monotonicity formula \cite{Huisken} (see \cite{Ilmanen2} for extension to Brakke flows) implies that $\mathcal{S}_\infty$ is non-trivial and backwardly self-similar for $t<0$.  In particular, $\nu_{-1}$ is a self-shrinking measure (cf. \cite[(4.1)]{BernsteinWang}) with $\lambda[\nu_{-1}]< \lambda_1$.  Furthermore, by \cite[Theorem 4.2]{WhiteMod2}, as each $\rho^{-1}_i \mathcal{S}$ is cyclic mod $2$, so is $\mathcal{S}_\infty$. Hence,  $\nu_{-1}=\mathcal{H}^2\lfloor {\Gamma}$ where $\Gamma$ is a self-shrinker with $\lambda[\Gamma]<\lambda_1$.  When $\lambda[\nu_{-1}]<\frac{3}{2}$ this follows from \cite[Proposition 4.2]{BernsteinWang} whereas when $\lambda[\nu_{-1}]\in [\frac{3}{2},\lambda_1)$ this follows as in the proof of \cite[Theorem 1.3]{BernsteinWang}.   A straightforward consequence of Theorem \ref{WhiteThm} and interior parabolic estimates \cite{EH} (cf. \cite[Lemma 2.1]{Schulze}) is that $\rho_i^{-1} \Sigma_{-\rho_i^2}\to \Gamma$ in $C^{\infty}_{loc}(\Real^{3})$.

By  \cite[Corollary 1.2]{BernsteinWang2}, $\Gamma$ is either a plane $P$ through the origin or $2\mathbb{S}^2$. If it is the sphere, then by the smooth convergence,  for $i$ sufficiently large, $\rho_i^{-1} \Sigma_{-\rho_i^2}$ is disjoint from $\partial B_{4}$ but meets $B_4$.  That is, $\Sigma_{-\rho_i^2}\cap\partial  B_{4\rho_i}=\emptyset$ and $\Sigma_{-\rho_i^2}\cap B_{4\rho_i} \neq \emptyset$.  Hence, by comparing with the mean curvature flow of $\partial B_{4\rho_i}$, the flow of $\Sigma_{-\rho_i^2}$ must form a singularity in finite time.  However, this contradicts the hypothesis that $S$ is a smooth flow in $\Real^{3}$ for all $t\in\Real$ and so we must have that $\Gamma$ is a plane through the origin.

As such, by the monotonicity formula, $\lambda[S]=\lambda[\Gamma]=1$ and each $\Sigma_t$ is flat by Theorem \ref{WhiteThm} and hence is a (constant) translation of  $\Gamma$.
\end{proof}

We will also need a parabolic analog of a standard blow-up result, and we include a proof for the sake of completeness.
\begin{lem} \label{BlowupLem}
Let $S=\set{\Sigma_\tau}_{\tau\in (t-4R^2, t+4R^2)}$ be a smooth mean curvature flow in $B_{2R}(\xX(p))$ with $p\in \Sigma_t$ and 
$$
|A_{\Sigma_t}(p)| \geq 4 N R^{-1}>0.
$$
There exist $(\xX(q),s)\in S$ and $\gamma>0$ so that $C_{N\gamma} (\xX(q),s)\subset C_{2R}(\xX(p),t)$
and 
$$
\sup_{(\xX(p'),\tau)\in S\cap C_{N \gamma}(\xX(q),s)} |A_{\Sigma_\tau}(p')|\leq 2 \gamma^{-1} = 2|A_{\Sigma_s}(q)|.
$$
\end{lem}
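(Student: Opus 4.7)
The plan is to prove this by a standard Hamilton-style point-picking iteration, selecting successively higher curvature points until one is found that nearly maximizes curvature in a parabolic neighborhood of the appropriate size. The geometric decay of the radii at each step keeps everything inside $C_{2R}(\xX(p),t)$, while the smoothness of the flow forces termination in finitely many steps.

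Concretely, I would construct inductively a sequence $(q_i,s_i)\in S$ and positive numbers $\gamma_i$ as follows. Set $(q_0,s_0)=(p,t)$ and $\gamma_0=|A_{\Sigma_t}(p)|^{-1}$; by the hypothesis $\gamma_0\leq R/(4N)$. At step $i$, if
$$
\sup_{(\xX(p'),\tau)\in S\cap C_{N\gamma_i}(\xX(q_i),s_i)} |A_{\Sigma_\tau}(p')| \leq 2\gamma_i^{-1},
$$
stop and set $(q,s)=(q_i,s_i)$, $\gamma=\gamma_i$. Otherwise, choose $(q_{i+1},s_{i+1})\in S\cap C_{N\gamma_i}(\xX(q_i),s_i)$ with $|A_{\Sigma_{s_{i+1}}}(q_{i+1})|>2\gamma_i^{-1}$ and set $\gamma_{i+1}=|A_{\Sigma_{s_{i+1}}}(q_{i+1})|^{-1}<\gamma_i/2$. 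By construction, at every step $\gamma_{i+1}<\gamma_i/2$, so the $\gamma_i$ decay geometrically.

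The next step is to verify that every cylinder $C_{N\gamma_i}(\xX(q_i),s_i)$ produced by the iteration is contained in $C_{2R}(\xX(p),t)$. Summing the spatial displacements,
$$
|\xX(q_i)-\xX(p)| \leq \sum_{j<i} N\gamma_j \leq 2N\gamma_0 \leq R/2,
$$
so any point in $B_{N\gamma_i}(\xX(q_i))$ lies within $R/2+R/4=3R/4<2R$ of $\xX(p)$. Analogously, summing $\sum N^2\gamma_j^2 \leq 4N^2\gamma_0^2/3 \leq R^2/12$ and adding the final $N^2\gamma_i^2\leq R^2/16$ gives a time displacement less than $4R^2$, so $C_{N\gamma_i}(\xX(q_i),s_i)\subset C_{2R}(\xX(p),t)$ at every stage.

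Finally, the iteration must terminate. If not, the $(q_i,s_i)$ all lie in the compact set $\bar{B}_{3R/4}(\xX(p))\times [t-R^2/6,t+R^2/6]$, which is contained in the open domain $B_{2R}(\xX(p))\times(t-4R^2,t+4R^2)$ on which $S$ is a smooth mean curvature flow. Hence $|A|$ is uniformly bounded on this set, contradicting $|A_{\Sigma_{s_i}}(q_i)|=\gamma_i^{-1}\to\infty$. The terminal $(q,s)$ and $\gamma$ satisfy exactly the required supremum bound, since $\gamma^{-1}=|A_{\Sigma_s}(q)|$. The only delicate point, and the one to be a bit careful about, is bookkeeping the geometric sums to confirm containment in $C_{2R}(\xX(p),t)$; everything else is the standard point-picking mechanism.
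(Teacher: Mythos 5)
Your proof is correct, but it takes a genuinely different route from the paper's. The paper proceeds by a single continuous maximization: it introduces the weighted curvature function $f(\xX(p'),\tau)=\bigl(R-\rho(\xX(p'),\tau)\bigr)\,|A_{\Sigma_\tau}(p')|$ on the compact set $S\cap\bar C_R(\xX(p),t)$, where $\rho$ is the parabolic distance to $(\xX(p),t)$, notes that $f$ vanishes on the boundary, and takes $(\xX(q),s)$ to be the interior maximum; the triangle inequality applied to $\rho$ then directly yields both the containment of $C_{N\gamma}(\xX(q),s)$ and the factor-of-two curvature bound in one stroke. You instead run a discrete Hamilton-style point-picking iteration, halving $\gamma$ at each unsuccessful stage and summing the resulting geometric series to control the spatial and temporal drift, then invoking compactness and smoothness of the flow to force termination. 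Both are standard and correct; the paper's version is slightly slicker because existence of the maximizer replaces the termination argument, and the desired estimate falls out of one comparison of $f$-values rather than from tracking a recursion. Your version is more elementary and makes the mechanism of the point selection explicit, at the cost of the geometric-series bookkeeping (which you do carry out correctly, even if the constants $3R/4$ and $R^2/6$ are looser than necessary compared to the $R/2$ and $R^2/12$ your sums actually give). One small point worth making explicit in your termination step: uniform boundedness of $|A|$ on $S\cap\bigl(\bar B_{3R/4}(\xX(p))\times[t-R^2/6,\,t+R^2/6]\bigr)$ uses that the space-time track is a proper submanifold of $B_{2R}(\xX(p))\times(t-4R^2,t+4R^2)$, which follows from properness of each $\Sigma_\tau$; this is the same compactness fact the paper uses implicitly when asserting that $f$ attains its maximum.
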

\begin{proof}
Let $\rho(\yY,\tau)=\max \set{ |\yY-\xX(p)|, |\tau-t|^{1/2}}$ and define a continuous function $f$ on the restriction of $S$ to $\bar{C}_{ R}(\xX(p),t)=\bar{B}_R(\xX(p))\times[t-R^2, t+R^2]$, the closed parabolic cylinder, by
$$
f(\xX(p'),\tau)= \left(R-\rho(\xX(p'),\tau)\right) |A_{\Sigma_\tau}(p')|.
$$
As $f$ is continuous and vanishes on the boundary, the hypothesis on $S$ ensures that there is a point $(\xX(q),s)$ in $S \cap {C}_{R}(\xX(p),t)$ where $f$ achieves its (positive) maximum.
Hence,
$$
f(\xX(q),s)\geq f(\xX(p),t)\geq 4N.
$$
Set $\gamma= |A_{\Sigma_s}(q)|^{-1}$ and $\sigma=R-\rho(\xX(q),s)$, so $4N \gamma \leq \sigma$. If $(\yY, \tau)\in C_{\frac{\sigma}{2}}(\xX(q),s)$, then, by the triangle inequality,
$$
\rho(\yY,\tau)\leq R-\frac{\sigma}{2}<R.
$$
Hence, 
$\sigma \leq 2 (R-\rho(\yY,\tau))$ for $(\yY,\tau)\in C_{\frac{\sigma}{2}}(\xX(q),s)$, and $C_{\frac{\sigma}{2}}(\xX(q),s)\subset C_{R}(\xX(p),t)$.
Using that $N \gamma \leq \sigma/4<\sigma/2$, we obtain
\begin{align*}
\sup_{(\xX(p'),\tau)\in S\cap C_{N \gamma}(\xX(q),s)} \frac{\sigma}{2} |A_{\Sigma_\tau}(p')| & \leq \sup_{(\xX(p'),\tau)\in S\cap C_{\frac{\sigma}{2}}(\xX(q),s)} \frac{\sigma}{2} |A_{\Sigma_\tau}(p')|\\
&\leq \sup_{S\cap C_{\frac{\sigma}{2}}(\xX(q),s)}  f \\
&\leq f(\xX(q),s) =\sigma |A_{\Sigma_s}(q)|.
\end{align*}
The lemma follows from a rearrangement of the above inequalities.
\end{proof}

\begin{proof}(of Theorem \ref{RefinedWhiteThm})
We argue by contradiction.  Suppose that the theorem was not true for some $\epsilon>0$. Then there would be a sequence of smooth mean curvature flows $S^i=\set{\Sigma_t^i}_{t\in I^i}$ in $U^i$ with $\lambda[S^i]\leq \lambda_1-\epsilon$ so that $p^i\in \Sigma^i_{t^i}$,  $C_{r^i}(\xX(p^i),t^i) \subset U^i\times I^i$ and $|A_{\Sigma^i_{t^i}}(p^i)|\geq 4 i (r^i)^{-1}$.
By Lemma \ref{BlowupLem}, there are $(\xX(q^i), s^i)\in S^i\cap C_{r^i}(\xX(p^i),t^i)$ and $\gamma^i>0$ so that
$$
\sup_{(\xX(p'),\tau)\in S^i\cap C_{i \gamma^i}(\xX(q^i),s^i)} |A_{\Sigma_\tau}(p')|\leq 2 (\gamma^{i})^{-1} = 2|A_{\Sigma_{s^i}^i}(q^i)|.
$$
Now let 
$$
\hat{S}^i=(\gamma^i)^{-1}\left({S}^i-(\xX(q^i),s^i)\right),
$$ 
that is, the flows obtained by space-time translating $(\xX(q^i),s^i)$ to the space-time origin and parabolically dilating by $(\gamma^i)^{-1}$.  Observe that these restrict to smooth flows in $C_{i}(\OO,0)$ on which the second fundamental form is bounded by $2$ and $\lambda[\hat{S}^i]=\lambda[S^i]\leq \lambda_1-\epsilon<2$.  Hence, by interior parabolic estimates \cite{EH} and the fact that the entropy is bounded strictly above by $2$,  up to passing to a subsequence, the $\hat{S}^i$ converge in $C^\infty_{loc}(\Real^{3}\times\Real)$ to a smooth mean curvature flow 
$$
\hat{S}=\set{\hat{\Sigma}_t}_{t\in (-\infty,+\infty)}
$$
in $\Real^{3}$ which satisfies $\lambda[\hat{S}]\leq \lambda_1-\epsilon<\lambda_1$, $\OO\in \hat{\Sigma}_0$,  $|A_{\hat{\Sigma}_0}(\OO)|=1$ and
$$
\sup_{t\in (-\infty,+\infty)} \sup_{\hat{\Sigma}_t} |A_{\hat{\Sigma}_t}| \leq 2.
$$
The facts that $\hat{S}$ is an eternal smooth mean curvature flow in $\Real^{3}$, $\lambda[\hat{S}]<\lambda_1$ and $|A_{\hat{\Sigma}_0}(\OO)|=1$ contradict Proposition \ref{EternalSolnProp} and so proves the theorem.
\end{proof}

\section{Proof of Theorem \ref{MainThm}}
We are now ready to use Theorem \ref{RefinedWhiteThm} in order to prove Theorem \ref{MainThm}.  
We first need the following simple consequence of Theorem \ref{RefinedWhiteThm}
\begin{lem}\label{CloseNessLem}
Given $\epsilon>0$, there is a $L=L(\epsilon)>0$ so that if $S=\set{\Sigma_t}_{t\in [0,T)}$ is a smooth mean curvature flow in $\Real^3$ of closed surfaces with $\lambda[\Sigma_0]\leq \lambda_1-\epsilon$, then for each $t\in [0, T)$,
$$
\dist_H(\Sigma_0, \Sigma_t)\leq L\left\{\begin{array}{cc} \sqrt{t} & t\in [0, \frac{T}{2}] \\ \sqrt{2 T}- \sqrt{T-t} & t\in (\frac{T}{2},T).\end{array}\right.
$$
\end{lem}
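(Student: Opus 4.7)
The strategy is to combine Theorem~\ref{RefinedWhiteThm} with the elementary fact that, under smooth mean curvature flow, each point of the surface moves with speed $|H|$, and then integrate the resulting pointwise speed bound in time. The displacement of each point then controls the Hausdorff distance in both directions via the diffeomorphism induced by the flow on closed surfaces.

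First, I would verify that the entropy hypothesis of Theorem~\ref{RefinedWhiteThm} is met: since $S$ is a smooth mean curvature flow of closed surfaces in $\Real^3$ with initial time $0$, Huisken's monotonicity formula (as recorded in the preliminaries) gives $\lambda[S]=\lambda[\Sigma_0]\leq \lambda_1-\epsilon$. Next, fix $s\in (0,T)$ and $p\in\Sigma_s$. The largest open parabolic cylinder about $(\xX(p),s)$ that fits inside $\Real^3\times[0,T)$ has radius approaching $\sqrt{\min(s,T-s)}$ from below, so Theorem~\ref{RefinedWhiteThm} yields
$$
|A_{\Sigma_s}(p)|\leq \frac{C(\epsilon)}{\sqrt{\min(s,T-s)}}.
$$
Since for a surface $|\hH|=|H|\leq\sqrt{2}|A|$, we obtain a uniform pointwise bound $|\hH_{\Sigma_s}|\leq C'(\epsilon)/\sqrt{\min(s,T-s)}$.

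The second step is to integrate. Parametrize the flow by the normal-evolution map $\Phi\colon \Sigma_0\times[0,T)\to\Real^3$, a smooth family of diffeomorphisms with $\partial_\tau\Phi=\hH\circ\Phi$, so $|\partial_\tau\Phi|=|H|$. Then for each $p\in\Sigma_0$,
$$
|\Phi(p,t)-\xX(p)|\leq \int_0^t \frac{C'(\epsilon)}{\sqrt{\min(s,T-s)}}\,ds.
$$
The integrand is integrable near $s=0$ and $s=T$. An elementary computation splits into two cases: for $t\in[0,T/2]$, $\min(s,T-s)=s$ throughout and the integral evaluates to $2C'\sqrt{t}$; for $t\in(T/2,T)$, splitting at $T/2$ and using $\min(s,T-s)=T-s$ on the second piece gives $2C'(\sqrt{2T}-\sqrt{T-t})$, after combining $2\sqrt{T/2}+2\sqrt{T/2}=2\sqrt{2T}$. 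These are precisely the two asserted bounds with $L=2C'(\epsilon)$.

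To conclude, I would observe that since $\Phi_t\colon\Sigma_0\to\Sigma_t$ is a surjective diffeomorphism (the flow being smooth and the surfaces closed), every $q\in\Sigma_t$ is of the form $\Phi(p,t)$ for some $p\in\Sigma_0$, so the same displacement bound controls both $\Sigma_0\subset\bigcup_{q\in\Sigma_t}\bar{B}_r(q)$ and $\Sigma_t\subset\bigcup_{p\in\Sigma_0}\bar{B}_r(p)$. The only step requiring any care is the choice of parabolic radius in Theorem~\ref{RefinedWhiteThm} at times $s$ near the temporal boundary of $[0,T)$, where $r\to 0$ forces $|H|$ to blow up; this is the main potential obstacle, but it is benign because the singularity is of order $1/\sqrt{s}$ and therefore integrable, and no other step presents essential difficulty.
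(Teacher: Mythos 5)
Your proof is correct and follows essentially the same route as the paper: apply Huisken's monotonicity to check the entropy hypothesis, invoke Theorem~\ref{RefinedWhiteThm} with the largest parabolic cylinder fitting in $\Real^3\times[0,T)$ to get the $|A|\lesssim 1/\sqrt{\min(s,T-s)}$ bound, pass to $|H|\le\sqrt{2}|A|$, and integrate the speed along the normal parametrization. The paper does the same, only writing the $\min(s,T-s)$ bound directly as a two-case display; your closing remark that the flow maps $\Sigma_0\to\Sigma_t$ are surjective diffeomorphisms makes explicit the two-sidedness of the Hausdorff estimate, which the paper leaves implicit.
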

\begin{proof}
As the $\Sigma_t$ are closed, standard ODE theory ensures that there is a time-varying parametrization of the $\Sigma_t$, $\Psi: M\times [0,T)\to \Real^3$ with the property that 
$$
\frac{\partial}{\partial t}\Psi(p,t) =\mathbf{H}_{\Sigma_t}(\Psi(p,t)).
$$
Furthermore, by Huisken's monotonicity formula $\lambda[S]=\lambda[\Sigma_0]\leq \lambda_1-\epsilon$.
Hence, for $t\in (0, T)$, Theorem \ref{RefinedWhiteThm}, implies that for all $p\in M$,
$$
|A_{\Sigma_t}(\Psi(p,t))|\leq C(\epsilon)\left\{\begin{array}{cc} \frac{1}{\sqrt{t}} & t\in [0, \frac{T}{2}] \\ \frac{1}{\sqrt{T-t}} & t\in (\frac{T}{2},T).\end{array}\right.
$$
Hence, for $t\in (0,T)$ and all $p\in M$ we have
$$
\left\vert \frac{\partial}{\partial t}\Psi(p,t) \right\vert  \leq \sqrt{2} \, C(\epsilon)\left\{\begin{array}{cc} \frac{1}{\sqrt{t}} & t\in [0, \frac{T}{2}] \\ \frac{1}{\sqrt{T-t}} & t\in (\frac{T}{2},T).\end{array}\right.
$$
Hence, integrating implies that for all $p\in M$,
$$
|\Psi(p,0)-\Psi(p,t)|\leq 2\sqrt{2} \left\{\begin{array}{cc} \sqrt{t} & t\in [0, \frac{T}{2}] \\ \sqrt{2T}-\sqrt{T-t} & t\in (\frac{T}{2},T),\end{array}\right.
$$
which completes the proof with  $L=2\sqrt{2}\, C(\epsilon)$.
\end{proof}
\begin{proof} (of Theorem \ref{MainThm})
We argue by contradiction.  
That is, for some $\epsilon>0$, suppose there are closed hypersurfaces $\Sigma^i$ with $\lambda[\Sigma^i]\leq \lambda_2+\frac{1}{i}$ and so that $\dist_H(\rho \mathbb{S}^2+\yY,  \Sigma^i)\geq \rho \epsilon>0$ for all $\rho>0$ and $\yY\in \Real^3$.  Let $S^i=\set{\Sigma_t^i}_{t\in [0,T^i)}$ be the maximal smooth mean curvature flow with $\Sigma^i_0=\Sigma^i$.  By  \cite[Corollary 1.3]{BernsteinWang2}, Huisken's monotonicity formula \cite{Huisken} and the fact that $\lambda[\Sigma^i]\leq \lambda_1$, the first (and only) singularity of this flow is at $(\xX^i, T^i)$ where the flow disappears in a round point.
Now let 
$$
\hat{S}^i = (T^i)^{-1/2} (S^i -(\xX^i, T^i))=\set{\hat{\Sigma}_t^i}_{t\in [-1,0)}.
$$

Observe that $\lambda[\hat{S}^i]=\lambda[S^i]\leq \lambda_2+\frac{1}{i}$.  In particular, up to throwing out small values of $i$, by Lemma \ref{CloseNessLem}, there is a constant $\tau\in (-1,-\frac{1}{2})$, independent of $i$, so that
$$
\dist_H(\hat{\Sigma}_{-1}^i, \hat{\Sigma}_\tau^i)+\dist_H(2\mathbb{S}^2, \sqrt{-4\tau}\, \mathbb{S}^2)<\frac{\epsilon}{2}.
$$

By Theorem \ref{RefinedWhiteThm} and standard interior parabolic estimates \cite{EH}, up to passing to a subsequence, the $\hat{S}^i$ converge in $C^\infty_{loc}(\Real^{3}\times(-1,0))$ to a smooth mean curvature flow $\hat{S}$.  Clearly, $\lambda[\hat{S}]=\lambda_2$ and, by the upper semi-continuity of Gaussian density, this flow becomes singular at $(\OO,0)$.  As such, by \cite[Theorem 1.1]{BernsteinWang}, 
$$
\hat{S}=\set{\sqrt{-4t}\, \mathbb{S}^2}_{t\in (-1,0)}.
$$
In particular, we see that $\hat{\Sigma}^i_\tau \to \sqrt{-4\tau}\, \mathbb{S}^2$ in $C^\infty_{loc}(\Real^3)$.  As the $\hat{\Sigma}^i_\tau$ are connected, this implies that for $i$ sufficiently large, $\dist_H(\hat{\Sigma}^i_\tau, \sqrt{-4\tau}\, \mathbb{S}^2)<\frac{\epsilon}{4}$.  Hence, by the triangle inequality, for all $i$ sufficiently large,
$$
\dist_H(\hat{\Sigma}_{-1}^i, 2\mathbb{S}^2)< \frac{3}{4}\epsilon.
$$
That is,
$$
\dist_H(\Sigma^i_0, 2\sqrt{T^i}\, \mathbb{S}^2+\xX^i)<\frac{3}{4}\sqrt{T^i}\, \epsilon < 2\sqrt{T^i}\, \epsilon.
$$
As $\Sigma^i_0=\Sigma^i$, this contradicts our hypotheses and proves the theorem.
\end{proof}

\section{Bonnesen-style inequality}
In this section we use certain parabolic estimates, specifically Moser iteration and Schauder estimates, together with Huisken's monotonicity to prove Theorem \ref{HolderThm}. In order to state our preliminary estimates, note that for any smooth mean curvature flow $S=\set{\Sigma_t}_{t\in (-T,0)}$ in $\Real^{n+1}$, then the following function
$$
\phi_S(\xX(p),t)=2 t H_{\Sigma_t}(p)+\xX(p)\cdot \nN_{\Sigma_t}(p)
$$
is well-defined along the flow.  As observed by Smoczyk \cite[Proposition 4]{Smoczyk}, $\phi_S$ satisfies 
$$
\frac{d}{dt}\phi_S=\Delta_{\Sigma_t} \phi_S+|A_{\Sigma_t}|^2 \phi_S.
$$

\begin{prop} \label{EstProp}
If $S=\set{\Sigma_t}_{t\in (-T,0)}$ is a smooth mean curvature flow in $\Real^{n+1}$ of closed hypersurfaces and it satisfies
\begin{equation} \label{CurvAssumpEqn}
\sup_{t\in (-T,-\frac{T}{2})} (t+T)^{1/2}\sup_{\Sigma_t} |A_{\Sigma_t}|\leq C_0
\end{equation}
for some constant $C_0>0$, then there exists a $K_0=K_0(n,C_0)$ so that, for each $\tau\in (0,T/4]$,
$$
\sup_{\Sigma_{\tau-T}}\left( \tau^{-1} |\phi_S|^2+ |\nabla_{\Sigma_{\tau-T}} \phi_S|^2\right) \leq K_0 \tau^{-\frac{n}{2}-2} \int_{\frac{\tau}{2}-T}^{\tau-T} \int_{\Sigma_t} |\phi_S|^2 d\mathcal{H}^{n} dt.
$$
\end{prop}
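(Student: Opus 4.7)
My plan is to prove Proposition \ref{EstProp} by a standard parabolic regularity argument tailored to the moving hypersurface: parabolic rescaling to unit scale, Moser iteration to pass from the $L^2$ integral to an $L^\infty$ bound on $\phi_S$, and interior parabolic Schauder estimates to upgrade to a $C^1$ bound. The scale-invariant curvature hypothesis (\ref{CurvAssumpEqn}) provides exactly the control needed for the constants in both steps to depend only on $n$ and $C_0$. Concretely, set $\tilde\Sigma_s := \tau^{-1/2}\Sigma_{\tau s - T}$ for $s \in (0, T/\tau)$, a smooth mean curvature flow; since $\tau \le T/4$, this interval contains $(0,4]$. Hypothesis (\ref{CurvAssumpEqn}) rescales to $|A_{\tilde\Sigma_s}| \le C_0 s^{-1/2}$ for $s \in (0, 2]$, so in particular $|A_{\tilde\Sigma_s}| \le 2C_0$ on $[1/4, 2]$. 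With $\tilde u(\mathbf{y}, s) := \phi_S(\sqrt\tau\,\mathbf{y}, \tau s - T)$, Smoczyk's equation is preserved in the form $(D_s - \Delta_{\tilde\Sigma_s})\tilde u = |A_{\tilde\Sigma_s}|^2 \tilde u$ along the rescaled flow, and tracking how $d\mathcal H^n$, $\nabla$, and $dt$ scale reduces the claim to the scale-invariant estimate
$$\sup_{\tilde\Sigma_1}\bigl(|\tilde u|^2+|\tilde\nabla\tilde u|^2\bigr) \le K_0 \int_{1/2}^1 \int_{\tilde\Sigma_s} |\tilde u|^2\, d\tilde{\mathcal H}^n\, ds.$$

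\textbf{Moser iteration for the $L^\infty$ bound.} On each slice $\tilde\Sigma_s$ with $s \in [1/4, 2]$, the Michael-Simon Sobolev inequality combined with the uniform bound $|H_{\tilde\Sigma_s}| \le n|A_{\tilde\Sigma_s}| \le 2nC_0$ yields a Sobolev inequality with constants depending only on $n$ and $C_0$. Testing the parabolic equation against $\eta^2 |\tilde u|^{2q-2}\tilde u$ for a spacetime cutoff $\eta$ supported in $\{s > 1/2 - \delta\}$ and integrating by parts on the moving surface---carefully accounting for the mean-curvature term generated by differentiating $\int(\cdot)\, d\mathcal H^n$ in $s$ and absorbing the zeroth-order contribution $|A|^2 \tilde u^{2q}$ via the curvature bound---produces a Caccioppoli/reverse-Hölder inequality on parabolic boxes. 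Iterating on a nested family of parabolic cylinders shrinking to the slice $\{s=1\}$ in the standard Moser fashion then yields
$$\sup_{\tilde\Sigma_1}|\tilde u|^2 \le C\int_{1/2}^1\int_{\tilde\Sigma_s} |\tilde u|^2\, d\tilde{\mathcal H}^n\, ds.$$

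\textbf{Gradient bound via Schauder, and conclusion.} Because $|A_{\tilde\Sigma_s}|$ is uniformly bounded on $[1/4, 2]$ and the flow is smooth, the interior estimates of Ecker-Huisken yield uniform bounds on all covariant derivatives of $A$ on, say, $[3/8, 1]$, and the bound on $|A|$ supplies a uniform graphical radius. Locally parametrizing $\tilde\Sigma_s$ as a graph over a fixed tangent plane, $\tilde u$ becomes a solution of a linear parabolic equation on a fixed Euclidean cylinder with smooth, uniformly bounded coefficients, so standard interior parabolic Schauder estimates upgrade the $L^\infty$ bound just obtained to $\|\tilde\nabla\tilde u\|_{L^\infty(\tilde\Sigma_1)} \le C\|\tilde u\|_{L^\infty}$, and therefore to the desired bound in terms of $\iint |\tilde u|^2$. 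Reversing the parabolic rescaling produces exactly the exponent $\tau^{-n/2-2}$ in the statement. The main technical obstacle is the Moser step: one must choose cutoffs compatible with the moving surface, use Michael-Simon to obtain a Sobolev inequality with $H$-independent constants, and simultaneously absorb both the time-derivative-of-measure term and the coefficient $|A|^2$---this is precisely the place where the scale-invariant hypothesis (\ref{CurvAssumpEqn}), rather than mere smoothness of the flow, is indispensable.
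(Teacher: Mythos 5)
Your proposal follows essentially the same route as the paper: parabolically rescale to unit scale, use hypothesis \eqref{CurvAssumpEqn} to get uniform scale-invariant curvature control (and hence uniform graphical radius and uniformly bounded coefficients), then combine Moser iteration (for the $L^2$-to-$L^\infty$ step) with interior parabolic Schauder estimates (for the gradient bound), and finally rescale back. The one minor variation is in the Moser step: the paper pulls $\phi_S$ back through local graphical parametrizations furnished by Ecker--Huisken and runs the iteration on fixed Euclidean parabolic cylinders (via Lieberman's parabolic Moser theorem), whereas you propose running it intrinsically on the evolving hypersurface via Michael--Simon Sobolev and the evolution of the area element---both are viable, and the paper's extrinsic parametrization has the modest advantage of letting the Moser and Schauder steps be carried out in the identical Euclidean setting.
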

\begin{proof}
Fix any $\tau\in (0,T/4]$. Consider the parabolically rescaled flow
$$
\hat{S}=\set{\hat{\Sigma}_t}_{t\in (-\hat{T},0)} 
$$
given by 
$$
\hat{S}=\tau^{-1/2} (S-(\OO,0)).
$$
The hypothesis \eqref{CurvAssumpEqn} implies
\begin{equation} \label{ScaleCurvAssumpEqn}
\sup_{t\in (-\hat{T},-\frac{\hat{T}}{2})} (t+\hat{T})^{1/2} \sup_{\hat{\Sigma}_t} |A_{\hat{\Sigma}_t}| \leq C_0.
\end{equation}
It follows that there exists a $\rho=\rho(C_0,n)\in (0,\frac{1}{4})$ so that for each $p_0\in \hat{\Sigma}_{1-\hat{T}}$,  $\hat{S}\cap C_{2\rho}(\xX(p_0), 1-\hat{T})$ is the graph of a function $u_{p_0}$ over $T_{p_0} \hat{\Sigma}_{1-\hat{T}}\times (1-\hat{T}-4\rho^2, 1-\hat{T}+4\rho^2)$ which satisfies the (spatial) gradient bound
$$
|D u_{p_0}|\leq \frac{1}{100}.
$$
This gradient bound ensures that the domain of $u_{p_0}$ contains
$$ 
\left(B^n_{\rho}(p_0)\cap T_{p_0}\hat{\Sigma}_{1-\hat{T}}\right) \times  (1-\hat{T}-\rho^2,1-\hat{T}+\rho^2)
$$
and we henceforth restrict to this domain. Standard interior parabolic estimates for $u_{p_0}$ (see \cite{Lieberman} or \cite{EH}) imply that on this domain
$$
|u_{p_0}|+ |D u_{p_0}| +  |D^2 u_{p_0}| +|\partial_t u_{p_0}|+|D^3 u_{p_0}|\leq M_1
$$
for some $M_1=M_1(C_0,n)>0$.
	
Consider the natural parameterization of a piece of $S$ on
$$
B^n_{\rho}(\OO)\times  (-\rho^2,\rho^2) \subset \Real^{n}\times \Real
$$
given by	
$$
\Psi_{p_0}:(x,s)\mapsto \xX(p_0)+u_{p_0}(x,s+1-\hat{T})\nN_{\hat{\Sigma}_{1-\hat{T}}}(p_0).
$$
Use these parameterizations to pull everything back to $B_{\rho}^n(\OO)\times  (-\rho^2,\rho^2)$.  One verifies that in these coordinates the pull back, $\psi$, of $\phi_{\hat{S}}$ satisfies an equation of the form
$$
\frac{\partial}{\partial s} \psi- a^{ij} \partial^2_{ij}\psi + b^i \partial_i \psi  +c \psi=0
$$
where for any $\eta=(\eta_1,\ldots,\eta_n)$,
$$
\frac{1}{2}|\eta|^2\leq a^{ij}(x,s)\eta_i \eta_j\leq |\eta|^2,
$$
and
$$
\sup_{B_{\rho}(\OO)\times  (-\rho^2,\rho^2)} \left(|b^i| +|c|+ |D a^{ij}|+|D b^i|+|D c|\right) \leq M_2
$$
for some $M_2=M_2( M_1)$. 

Thus, standard parabolic Schauder estimates (e.g., \cite[Theorem 4.9]{Lieberman}) imply that
$$
|D \psi (\OO,0)|\leq K_1' \sup_{B_{\frac{1}{2}\rho}(\OO)\times[-\frac{1}{4}\rho^2,0]} |\psi|
$$
for some $K_1'=K_1'(M_2)$. Hence, 
\begin{equation} \label{GradEstEqn}
\sup_{\hat{\Sigma}_{1-\hat{T}}} |\nabla_{\hat{\Sigma}_{1-\hat{T}} }\phi_{\hat{S}}| \leq K_1 \sup_{s\in [-\frac{1}{4}\rho^2,0]} \sup_{\hat{\Sigma}_{1-\hat{T}+s}} |\phi_{\hat{S}}|,
\end{equation}
where $K_1=K_1(K_1',M_1)$.
To complete the proof we apply the Moser iteration (e.g., \cite[Theorem 7.36]{Lieberman}) to obtain
$$
\sup_{B_{\frac{1}{2}\rho}(\OO)\times[-\frac{1}{4}\rho^2,0]} |\psi|^2 \leq K_2' \int_{-\rho^2}^0 \int_{B_{\rho}(\OO)} |\psi|^2 d\mathcal{L}^n ds
$$
for some constant $K_2'=K_2'(M_2)$, where $d\mathcal{L}^n$ is Lebesgue measure in $\Real^n$. Clearly, this implies that
\begin{equation} \label{L2EstEqn}
 \sup_{s\in [-\frac{1}{4}\rho^2,0]} \sup_{\hat{\Sigma}_{1-\hat{T}+s}} |\phi_{\hat{S}}|^2\leq  K_2 \int_{1-\hat{T}-\rho^2}^{1-\hat{T}} \int_{\hat{\Sigma}_t} |\phi_{\hat{S}}|^2 d\mathcal{{H}}^n dt
\end{equation}
for $K_2=K_2(K_2',M_1)$.

Combining \eqref{GradEstEqn} and \eqref{L2EstEqn} gives
\begin{equation} \label{ScaleEstEqn}
\sup_{\hat{\Sigma}_{1-\hat{T}}} \left(|\phi_{\hat{S}}|^2+|\nabla_{\hat{\Sigma}_{1-\hat{T}}} \phi_{\hat{S}}|^2\right) \leq K_0 \int_{\frac{1}{2}-\hat{T}}^{1-\hat{T}} \int_{\hat{\Sigma}_t} |\phi_{\hat{S}}|^2 d\mathcal{{H}}^n dt
\end{equation}
for $K_0=(1+K_1^2)K_2$.

Therefore, substituting the relation that $\phi_{\hat{S}}(\cdot\, ,\cdot)=\tau^{-1/2}\phi_S(\tau^{1/2}\cdot\, ,\tau\cdot)$ into \eqref{ScaleEstEqn}, the proposition follows immediately from changing variables.
\end{proof}

Given a closed hypersurface $\Sigma\subset \Real^{n+1}$ there always exists an $\epsilon_0=\epsilon_0(\Sigma)$ so that for any $u\in C^\infty(\Sigma)$ with $\Vert u\Vert_{C^2(\Sigma)}\leq \epsilon_0$, the normal graph exponential graph of $u$ is a closed hypersurface. For any $\epsilon<\epsilon_0(\Sigma)$, let
$$
\mathcal{N}_\epsilon(\Sigma)=\set{\Gamma: \mbox{$\Gamma$ is the normal exponential graph of a $u\in C^\infty(\Sigma)$ with $\Vert u\Vert_{C^2(\Sigma)}\leq \epsilon$} }.
$$
Recall, the following fact (cf. \cite[Lemma 2.5]{CMLoj})
\begin{lem}\label{CMlem}
For each $\alpha\in (0,1]$, there is an $\epsilon>0$ and a $\Lambda>0$ so that if $\Gamma\in\mathcal{N}_\epsilon(\sqrt{2n}\, \mathbb{S}^n)$ and $\Gamma$ is the normal exponential graph of $u$ over $\sqrt{2n}\, \mathbb{S}^n$, then
$$
\Vert u \Vert_{C^{2,\alpha}(\sqrt{2n}\, \mathbb{S}^n)} \leq \Lambda \left\Vert H_{\Gamma}-\frac{\xX\cdot \nN}{2}\right\Vert_{C^\alpha(\sqrt{2n}\, \mathbb{S}^n)}.
$$
\end{lem}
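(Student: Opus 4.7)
The plan is to linearize the shrinker operator $F[\Gamma] := H_\Gamma - \tfrac{1}{2}\xX\cdot\nN_\Gamma$ at $\Sigma := \sqrt{2n}\,\mathbb{S}^n$ and invoke standard elliptic theory. For $u \in C^{2,\alpha}(\Sigma)$ with $\|u\|_{C^2(\Sigma)} \leq \epsilon_0$, let $\Gamma(u)$ denote its normal exponential graph and define $F[u] := F[\Gamma(u)]$ pulled back to $\Sigma$. Since $\Sigma$ is a self-shrinker, $F[0] = 0$, and since $F$ depends smoothly on $u$, $Du$, and $D^2 u$, Taylor expansion gives $F[u] = Lu + Q[u]$ where $L$ is the linearization at $u = 0$ and $Q$ collects the terms of order $\geq 2$.

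The first step is to establish invertibility of $L\colon C^{2,\alpha}(\Sigma) \to C^\alpha(\Sigma)$. A direct normal-graph computation, using that $\xX \cdot \nabla u \equiv 0$ on $\Sigma$ (since $\xX$ is normal) and $|A_\Sigma|^2 = \tfrac{1}{2}$, shows that $L$ is a constant-coefficient combination of $\Delta_\Sigma$ and the identity, acting on the $k$-th spherical harmonic eigenspace by $\pm(\mu_k - 1)$ where $\mu_k := \tfrac{k(k+n-1)}{2n}$ is the $k$-th eigenvalue of $-\Delta_\Sigma$; as a consistency check, constants $u \equiv \delta$ give $F[(\sqrt{2n}+\delta)\mathbb{S}^n] = \mp\delta + O(\delta^2)$. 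Since $\mu_0 = 0$, $\mu_1 = \tfrac{1}{2}$, $\mu_2 = \tfrac{n+1}{n}$, and $\mu_k$ is strictly increasing in $k$, we have $\mu_k \neq 1$ for every $k \in \N$. Hence $\ker L = 0$, and self-adjoint Fredholm theory on the compact manifold $\Sigma$ together with Schauder estimates yields a bounded inverse with norm $\Lambda_0 = \Lambda_0(n,\alpha)$.

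The second step is to control the quadratic remainder. Writing $F[u](p) = G(p, u(p), Du(p), D^2 u(p))$ in local coordinates for a smooth $G$ defined near $(p, 0, 0, 0)$, Taylor's theorem in the last three slots combined with the product estimate $\|fg\|_{C^\alpha} \leq \|f\|_{C^\alpha}\|g\|_{C^0} + \|f\|_{C^0}\|g\|_{C^\alpha}$ yields
\[
\|Q[u]\|_{C^\alpha(\Sigma)} \leq C_0 \|u\|_{C^2(\Sigma)}\|u\|_{C^{2,\alpha}(\Sigma)}
\]
for $\|u\|_{C^2(\Sigma)} \leq \epsilon_0$, with $C_0 = C_0(n,\alpha)$. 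Combining with $Lu = F[u] - Q[u]$ and the invertibility of $L$,
\[
\|u\|_{C^{2,\alpha}(\Sigma)} \leq \Lambda_0 \|F[u]\|_{C^\alpha(\Sigma)} + \Lambda_0 C_0 \|u\|_{C^2(\Sigma)}\|u\|_{C^{2,\alpha}(\Sigma)},
\]
and choosing $\epsilon := \min\{\epsilon_0, (2\Lambda_0 C_0)^{-1}\}$ absorbs the last term, yielding the claim with $\Lambda := 2\Lambda_0$. The main obstacle is the spectral check $1 \notin \{\mu_k\}_{k \geq 0}$, which is the analytic manifestation of the rigidity of $\Sigma$ among self-shrinkers (no one-parameter family of shrinkers is tangent at the identity map to $\Sigma$ through either translations $\Sigma + tv$ or dilations $(1+t)\Sigma$), and fails for generic self-shrinkers.
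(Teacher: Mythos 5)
The paper does not prove this lemma; it is imported as a known fact from \cite[Lemma 2.5]{CMLoj}. Your self-contained proof is correct and is the standard argument underlying that reference: linearize the shrinker operator at the round sphere, verify spectral non-degeneracy, invert, and absorb the quadratic remainder. A few remarks on the details. With the paper's sign convention ($\mathbf{H}=-H\nN$, $H=\Div\nN$), the linearization of $H-\tfrac12\xX\cdot\nN$ in the direction $u\nN$ at $\sqrt{2n}\,\mathbb{S}^n$ is exactly $-\Delta_\Sigma u - u$ (since $\xX\cdot\nabla u\equiv 0$ and $|A|^2=\tfrac12$ there), so it acts on the $k$-th spherical harmonic space by $\mu_k-1$ with $\mu_k=\tfrac{k(k+n-1)}{2n}$; your ``$\pm$'' correctly flags the sign ambiguity and the sign is immaterial. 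Your check that $\mu_k\ne 1$ for all $k\ge 0$ is right ($\mu_0=0$, $\mu_1=\tfrac12$, $\mu_2=\tfrac{n+1}{n}$, and $\mu_k$ is increasing), and your consistency check on concentric spheres, giving $-\delta+O(\delta^2)$, matches the $k=0$ eigenvalue $-1$. The remainder estimate $\|Q[u]\|_{C^\alpha}\le C_0\|u\|_{C^2}\|u\|_{C^{2,\alpha}}$ is valid (the operator is quasilinear, so $D^2u$ enters linearly and the worst products are controlled by the Leibniz-type H\"older estimate you quote), and the absorption step is clean. The only thing worth saying explicitly, which you do implicitly, is that the non-degeneracy of the sphere is special: for the cylinders $\sqrt{2k}\,\mathbb{S}^k\times\Real^{n-k}$ the analogous operator has a nontrivial kernel, so this argument genuinely uses that $\Gamma$ is graphical over a sphere.
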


Now we are ready to prove Theorem \ref{HolderThm}.
\begin{proof}(of Theorem \ref{HolderThm})
First observe, as $\Sigma$ is closed, by rescaling and translating so $\Sigma\subset \bar{B}_1(\OO)$, we have that $\dist_H(\Sigma, \mathbb{S}^2)\leq 1$.  As such if $\lambda[\Sigma]-\lambda_2 > \delta>0$, then the claim holds as long as $K>\delta^{-1/8}$.  For this reason, we may, with out loss of generality, restrict attention to $\Sigma$ with $\lambda[\Sigma]-\lambda_2\leq \delta$. Here $\delta<\frac{1}{2}\left(\lambda_1-\lambda_2\right)$ will be determined below.
	
Let $S=\set{\Sigma_t}_{t\in [0,T)}$ be the maximal smooth mean curvature flow with $\Sigma_0=\Sigma$.  By  \cite[Corollary 1.3]{BernsteinWang2}, Huisken's monotonicity formula \cite{Huisken} and the fact that $\lambda[\Sigma]\leq\lambda_2+\delta< \lambda_1$, the first (and only) singularity of this flow is at the point $(\xX, T)$ where the flow disappears in a round point.
Now let 
$$
\hat{S} = T^{-1/2} (S -(\xX, T))=\set{\hat{\Sigma}_t}_{t\in [-1,0)}.
$$
By replacing $\Sigma$ by $\hat{\Sigma}_0$, we may, without loss of generality, assume that $S=\hat{S}$.
	
We next claim that given $\epsilon>0$, there is a $\delta>0$ so that if $\lambda[\Sigma]\leq \lambda_2+\delta$, then for all $t\in [-\frac{3}{4},-\frac{1}{4}]$, 
$$
(-t)^{-1/2}{\Sigma}_t\in \mathcal{N}_\epsilon(2\mathbb{S}^2).
$$  
To see this we argue by contradiction.  Indeed, suppose that $\Sigma^i$ are closed surfaces satisfying $\lambda[\Sigma^i]\to \lambda_2$, but for which the claim did not hold.  By Theorem \ref{RefinedWhiteThm} and standard interior parabolic estimates \cite{EH}, up to passing to a subsequence, the ${S}^i$ converge in $C^\infty_{loc}(\Real^{3}\times(-1,0))$ to a smooth mean curvature flow ${S}^\infty$.  Clearly, $\lambda[{S}^\infty]=\lambda_2$ and, by the upper semi-continuity of Gaussian density, this flow becomes singular at $(\OO,0)$.  As such, by \cite[Theorem 1.1]{BernsteinWang}, 
$$
{S}^\infty=\set{\sqrt{-4t}\, \mathbb{S}^2}_{t\in (-1,0)},
$$
and we obtain the desired contradiction.
	
Let $\epsilon>0$ and $\Lambda>0$ be the constants given by Lemma \ref{CMlem} with $\alpha=1$ and use this $\epsilon$ to select $\delta>0$ as above.  Let $\Sigma_t'=(-t)^{-1/2} \Sigma_t$ and
$$
t_0(\Sigma)=\inf\set{t\in (-1,-1/4): \Sigma_s'\in\mathcal{N}_\epsilon(2\mathbb{S}^2) \mbox{ for all $s\in [t,-1/4]$}}.
$$
Our choice of $\delta$ ensures that $t_0(\Sigma)\in [-1, -\frac{3}{4}]$.
	
If $t_0=t_0(\Sigma)>-1$, then $\Sigma_{t_0}'$ is the normal exponential graph over $2\mathbb{S}^2$ of a function $u_{t_0}$ with $C^2$ norm exactly $\epsilon$.  
Observe that
\begin{align*}
\left\Vert H_{\Sigma_{t_0}'}-\frac{\xX\cdot \nN}{2}\right\Vert_{C^{0}(\Sigma_{t_0}')} &= \left\Vert (-t_0)^{1/2} H_{\Sigma_{t_0}}-\frac{\xX\cdot \nN}{2(-t_0)^{1/2}}\right\Vert_{C^{0}(\Sigma_{t_0})} \\
&= \frac{1}{2}(-t_0)^{-1/2}  \left\Vert\phi_S|_{\Sigma_{t_0}} \right\Vert_{C^{0}(\Sigma_{t_0})}
\end{align*}
and
\begin{align*}
& \left\Vert\nabla_{\Sigma_{t_0}'}\left( H_{\Sigma_{t_0}'}-\frac{\xX\cdot \nN}{2}\right)\right\Vert_{C^{0}(\Sigma_{t_0}')} \\
= & \left\Vert (-t_0)^{1/2} \nabla_{\Sigma_{t_0}}\left( (-t_0)^{1/2} H_{\Sigma_{t_0}}-\frac{\xX\cdot \nN}{2(-t_0)^{1/2}}\right)\right\Vert_{C^{0}(\Sigma_{t_0})}\\
= & \frac{1}{2}\left\Vert\nabla_{\Sigma_{t_0}}\phi_S|_{\Sigma_{t_0}} \right\Vert_{C^{0}(\Sigma_{t_0})}.
\end{align*}
Since $t_0\leq -\frac{3}{4}$,
$$
\left\Vert H_{\Sigma_{t_0}'}-\frac{\xX\cdot \nN}{2}\right\Vert_{C^{0,1}(\Sigma_{t_0}')} \leq 2\left\Vert \phi_S|_{\Sigma_{t_0}} \right\Vert_{C^{0,1}(\Sigma_{t_0})}.
$$
By Theorem \ref{RefinedWhiteThm}, the curvature assumption \eqref{CurvAssumpEqn} holds for the flow $S$.
Hence, setting $\tau=t_0+1\leq\frac{1}{4}$, Lemma \ref{CMlem} and Proposition \ref{EstProp} imply that
\begin{align*}
\epsilon^2 &= \Vert u_{t_0} \Vert^2_{C^2(2\mathbb{S}^2)} \leq\Vert u_{t_0} \Vert^2_{C^{2,1}(2\mathbb{S}^2)} \leq \Lambda^2\left\Vert H_{\Sigma_{t_0}'}-\frac{\xX\cdot \nN}{2}\right\Vert^2_{C^{0,1}(\Sigma_{t_0}')} \\
&\leq \tilde{K}_1 \tau^{-3} \int_{ \frac{\tau}{2}-1}^{\tau-1} \int_{\Sigma_t}|\phi_S|^2 d\mathcal{H}^2 dt
\end{align*}
where $\tilde{K}_1$ depends only on $K_0$ and $\Lambda$.  
	
Next, observe that Lemma \ref{CloseNessLem} and the fact that $\lambda[\Sigma]<\frac{\lambda_1+\lambda_2}{2}<\lambda_1$ imply that there is some universal $L>0$ so that for all $p\in \Sigma_t$, $|\xX(p)|\leq 2L+3$.
Thus there is some universal $\kappa>1$ so that for all $t\in [-1, -\frac{3}{4}]$ and $p\in\Sigma_t$,
$$
\kappa^{-1} \leq (-4\pi t)^{-1} e^{\frac{|\xX(p)|^2}{4t}}\leq \kappa.
$$
Hence, 
\begin{equation} \label{T0EstEqn}
\begin{split}
\epsilon^2 &\leq \tilde{K}_1 \tau^{-3} \int_{ \frac{\tau}{2}-1}^{\tau-1} \int_{\Sigma_t} |\phi_S|^2 d\mathcal{H}^2 dt\\
&\leq \kappa\tilde{K}_1 \tau^{-3} \int_{ \frac{\tau}{2}-1}^{\tau-1} \int_{\Sigma_t} |\phi_S|^2 (-4\pi t)^{-1} e^{\frac{|\xX|^2}{4t}}d\mathcal{H}^2 dt\\	
&\leq 4\kappa\tilde{K}_1 \tau^{-3}\int_{ \frac{\tau}{2}-1}^{\tau-1} \int_{\Sigma_t} \left|H_{\Sigma_t}+\frac{\xX\cdot \nN}{2t}\right|^2 (-4\pi t)^{-1} e^{\frac{|\xX|^2}{4t}}d\mathcal{H}^2 dt\\
&\leq 4\kappa\tilde{K}_1 \tau^{-3}\left(\lambda[\Sigma]-\lambda_2\right)
\end{split}
\end{equation}
where the last inequality follows from Huisken's monotonicity formula and the definition of entropy.
This may be rewritten as
$$
\tau\leq \tilde{K}_2 \left(\lambda[\Sigma]-\lambda_2\right)^{1/3}
$$
for $\tilde{K}_2=(4\kappa\tilde{K}_1)^{1/3}\epsilon^{-2/3}$.
	
By \cite[Theorem 1.1]{BernsteinWang}, $\Sigma$ is round iff $\lambda[\Sigma]=\lambda_2$, in which the theorem holds for any $K>0$. Without loss of generality we assume $\lambda[\Sigma]>\lambda_2$.
Let 
$$
\tau_0= \tilde{K}_2 \left(\lambda[\Sigma]-\lambda_2\right)^{1/3}.
$$
By shrinking $\delta$ we may force $\tau_0<\frac{1}{8}$ and so ensure that $\tau_*=\tau_0^{3/4}>\tau_0$ and $\tau_*<\frac{1}{4}$. Clearly, 
$$
-1\leq t_0(\Sigma)<-1+\tau_*=t_*<-\frac{3}{4}.
$$  
	
By Lemma \ref{CloseNessLem}, as long as $\delta$ is sufficiently small,
$$
\dist_H(\Sigma, \Sigma_{t_*})\leq L \sqrt{\tau_*}= \tilde{K}_3  \left(\lambda[\Sigma]-\lambda_2\right)^{1/8}
$$
where $\tilde{K}_3=L\tilde{K}_2^{3/8}$.
Furthermore, by Proposition \ref{EstProp} and the fact that $t_0(\Sigma)<t_*$, a similar argument as \eqref{T0EstEqn} gives
\begin{align*}
\dist_H(\Sigma_{t_*}', 2\mathbb{S}^2)^2 & \leq \Vert u_{t_*} \Vert^2_{C^{2,1}(2\mathbb{S}^2)}\leq \Lambda^2 \left\Vert H_{\Sigma^\prime_{t_*}}-\frac{\xX\cdot \nN}{2} \right\Vert^2_{C^{0,1}(\Sigma^\prime_{t_*})}\\
&\leq \tilde{K}_1 \tau^{-3}_* \int_{ \frac{\tau_*}{2}-1}^{\tau_*-1} \int_{\Sigma_t}|\phi_S|^2 d\mathcal{H}^2 dt \\
&\leq 4 \kappa\tilde{K}_1 \tau^{-3}_* \left(\lambda[\Sigma]-\lambda_2\right)\\
&\leq 4 \kappa \tilde{K}_1 \tilde{K}_2^{-9/4} \left(\lambda[\Sigma]-\lambda_2\right)^{1/4}.
\end{align*}
Hence, as $t_*<-\frac{3}{4}$,
$$
\dist_H(\Sigma_{t_*}, 2(-t_*)^{1/2}\mathbb{S}^2) \leq  4 (\kappa \tilde{K}_1)^{1/2} \tilde{K}_2^{-9/8}\left(\lambda[\Sigma]-\lambda_2\right)^{1/8},
$$
and the result follows from the triangle inequality.
\end{proof}

\end{document}